\newcommand{\og}{«\:}
\newcommand{\fg}{»}
\theoremstyle{plain}
\theoremstyle{definition}
\newtheorem{definition}{Definition}
\newtheorem{theorem}[definition]{Theorem}
\newtheorem{corollary}[definition]{Corollary}
\newtheorem{notation}[definition]{Notation}
\newtheorem{proposition}[definition]{Proposition}
\newtheorem{remark}[definition]{Remark}
\newtheorem{hypothesis}[definition]{Hypothesis}
\newcounter{cnt}
\title{\textbf{Optimal transport, determinantal point processes and the Bergman kernel}}
\author{William DRIOT \thanks{LTCI, Télécom Paris \\ 19, place Marguerite Perey, 91120 Palaiseau \\ Contact : william.driot[at]telecom-paris.fr, laurent.decreusefond[at]telecom-paris.fr \\ 2020 Mathematics Subject Classification : 60G55,68Q25,68W25,68W40 \\ Keywords : Determinantal point process, Bergman point process, Bergman Kernel, Optimal transport, Point process simulation} \and Laurent DECREUSEFOND }
\date{June 2025}
\begin{document}

\maketitle

\begin{abstract}
    We study the Bergman determinantal point process from a theoretical point of view motivated by its simulation. We construct restricted and restricted-truncated variants of the Bergman kernel and show optimal transport inequalities involving the Kantorovitch-Rubinstein Wasserstein distance to show to what extent it is fair to truncate the restriction of this point process to a compact ball of radius $1 - \varepsilon $. We also investigate the deviation of the number of points of the restricted Bergman determinantal point process, indicate which number of points looks like an optimal choice, and provide upper bounds on its deviation, providing an answer to an open question asked in \cite{Decreusefond2016}. We also consider restrictions to other regions and investigate the choice of such regions for restriction. Finally, we provide general results as to the deviation of the number of points of any determinantal point process.
\end{abstract}

\tableofcontents

\setlength{\parindent}{15pt} 
\setlength{\parskip}{6pt}    

\section{Introduction}

Introduced for the first time in \cite{Macchi1975}, determinantal point processes (DPPs) are a particular class of point process, the correlations functions of which can be written as determinants. After having been thoroughly studied from a probabilistic point of view, (see \cite{ShiraiTakahashi2003} and \cite{Soshnikov2000} for instance, among many others), they have been used to model fermion particles \cite{Tamura2006}. They have suprisingly and remarkably appeared in the study of two very elegant mathematical problems : the first one is the point process defined as the set of eigenvalues of random $ n \times n $ matrices with independent complex standard gaussian entries. It turns out that, as n goes to infinity, this point process converges in law to a point process that can be described \cite{Ginibre1965} as a determinantal point process, the kernel of which was thereafter called the Ginibre kernel. The second one is the point process defined as the set of zeroes of Gaussian Analytical Functions (GAF) in the unit disc of the complex plane - the former are analytical functions (i.e., power series), the coefficients of which are all independent complex standard gaussian random variables : as it turns out, this point process can also be described as a determinantal point process, the kernel of which is this time the Bergman kernel (see \cite{Hough2009ZerosGAF}, \cite{ZerosGaussianPowerSeries}). DPPs also appear and are used in the study of a vide variety of different mathematical problems, such as self-avoiding walks, random spanning trees or random integer partitions, see \cite{Burton1993}, \cite{Meliot2021}, \cite{Tao2024}. More recently, some peers attempted to apply them to machine learning \cite{DeterminantalML2013}, and to model phenomena arising in the field of networking \cite{Miyoshi2014}, \cite{TorrisiLeonardi2014}, \cite{Vergne2014}. The fact that these processes exhibit repulsion between particles is the core property that makes them very fitting for many applications.

The two aforementionned kernels, respectively Bergman and Ginibre kernels, benefit from several interesting properties. While the former, taking its values in the unit disc of the complex plane, is invariant with respect to rotations, the latter, which is valued in the whole complex plane, is invariant with respect to rotations and translations. Whether it be for quantum mechanics, machine learning or networking modelling, for pratical uses it is necessary to benefit from a pratical simulation algorithm. This work has been initiated in \cite{Hough2006} wherein the authors give a practical algorithm for the simulation of DPPs. The simulation procedure which is hinted in \cite{Ginibre1965} was fully developed in \cite{Caer1990}.

The main problem of these algorithms is that, strictly speaking, they unfeasible due to the fact that the point processes involved exhibit an infinite number of points almost surely, and that, in the case of the Ginibre kernel, the points scatter throughout the whole plane. In \cite{DecreusefondMoroz2021}, the authors construct restricted and trucated variants of the Ginibre point process, that allow simulation to become closer to feasible. More precisely, the Ginibre point process is therein restricted onto a ball of arbitrary radius and centered at the origin. The number of points then becomes almost surely finite. However, the stochastic dynamics of DPPs implies that computing the total number of points in the process requires to simulate a countable infinity of Bernoulli random variables, which is still unfeasible. The only option left is then to truncate the DPPs to a fixed (and finite) number of points, and brush off the remaining ones. But then, is it possible to quantify the approximation error that inevitably comes with this method ? To what extent is this new probability measure close
to the original one ? Theoretical results were shown in \cite{DecreusefondMoroz2021}, providing answers to this question. For instance, the authors have shown that denoting $\mathfrak S^R $ and $ \mathfrak S^R_N $ the Ginibre DPP restricted to a ball of radius $R$ centered at the origin of the complex plane and its truncation to $N$ eigenvalues respectively, then truncating this DPP to $N_R = (R + c)^2 $ yields, for all $ c > 0 $,
\begin{equation}\label{eq:main-result-ginibre}
    \mathcal W_{KR} (\mathfrak S^R, \mathfrak S^R_N ) \leqslant \sqrt{\frac 2 \pi } R e^{-c^2} 
\end{equation}
whenever $ R > c$, where $ \mathcal W_{KR} $ denotes the Kantorovitch-Rubinstein (or Wassertein-1) distance on the set of probability measures on the set of configurations on $\mathbf C$. This results shows that the (Wasserstein) distance between the two probability measures is exponentially small in $c^2$ as the number $ N_R = (R+c)^2 $ \textit{deviates} from $ R^2 $, and implies that the two DPPs $ \mathfrak S^R $ and $ \mathfrak S^R_{N_R} $ coincide with high probability. As a result, truncating to $R^2$ points looks like a very good choice, and this is why it is what they suggested (and implemented in \cite{MorozSoftware}). 

This is all very nice and convenient, but only applies to one DPP law, the Ginibre DPP. In this paper, we extend this work, by constructing restricted and restricted-trucated versions of the Bergman DPP, which will henceforth be at the very core of our study. We find a good compromise as to the right number of points depending on the chosen radius $R$ to truncate our restricted DPP and provide the theoretical results that ensure that the use of the simulation algorithm presented in \cite{DecreusefondMoroz2021} produces a result that is not blatantly far from the (theoretical and nonfeasible) simulation of the restricted Bergman DPP. In other words, the truncation error is small when choosing the right number of points, hence providing an answer to the fifth open question asked in \cite{Decreusefond2016} for this DPP. We also provide some general results for DPPs that are not specific to the Bergman kernel.

\begin{figure}[h]\label{fig:main-fig}
    \centering
    \begin{minipage}{0.45\textwidth}
        \centering
        \includegraphics[width=\linewidth]{bergman.png} 
        \caption{Simulation of the Bergman DPP restricted to a radius of 0.9995, using \cite{MorozSoftware}. Number of points : 985.}
    \end{minipage}
    \hfill
    \begin{minipage}{0.45\textwidth}
        \centering
        \includegraphics[width=\linewidth]{bergman_zoom.png} 
        \caption{A zoom on the boundary of the simulation on the left.}
    \end{minipage}
\end{figure}

\section{Determinantal Point Processes}

Let $E$ be a Polish space, equipped with the Borel $\sigma$-algebra generated by the open subsets of $E$. In the sequel, $\lambda$ is a reference Radon measure on this measurable space. Denote $\mathfrak{N}$ the space of locally finite subsets in $E$, also called the configuration space : $$ \mathfrak{N} = \left\{ \xi \subset E \mid \forall \Lambda \subset E \text{ compact},\, |\Lambda \cap \xi| < \infty \right\}. $$ 
It is equipped with the topology of vague convergence, also called weak topology, which is the weakest topology such that for all continuous and compactly supported functions $f$ on $E$, the mapping $ \xi \mapsto \langle f, \xi \rangle := \sum\limits_{x \in \xi} f(x) $ is continuous.  Once topologized, it naturally comes with a $\sigma$-algebra. Elements of $ \mathfrak N$, i.e., locally finite configurations on $E$, are identified with atomic Radon measures on $E$.

Next, considering a compact subset $\Lambda \subset E$, we may consider the set $\mathfrak{N}_\Lambda = \{\xi \subset \Lambda,\, |\xi| < \infty\}$ of finite configurations on $\Lambda$, equipped with the trace $\sigma$-algebra.

A point process is then any random variable valued in $\mathfrak{N}$.

\begin{definition}
    Let $\eta$ be a point process on $E$. For $n \geq 1$, the $n$-th factorial moment of $\eta$ is the point process defined on $E^n$ as the set of $n$-tuples of points of $\eta$ :
    \[ 
        M_\eta^n = \{(x_1, \dots, x_n) \mid x_1, \dots, x_n \in \eta\}.
    \]
\end{definition}

\begin{definition}
    The $n$-th factorial moment measure of $\eta$ is the measure defined by
    \[ 
        \mu_\eta^n(B_1 \times \dots \times B_n) = \mathbb{E}(M_\eta^n(B_1 \times \dots \times B_n)),
    \]
    where $B_1, \dots, B_n$ are measurable subsets of $E$. This relation defines a unique measure on $E^n$ equipped with the product $\sigma$-algebra.
\end{definition}

\begin{definition}
    If $\mu_\eta^n$ admits a Radon-Nikodym derivative $\rho_n$ with respect to $\lambda^{\otimes n}$, the latter is called the $n$-th correlation function of $\mu$. In other words, we have, assuming their existence
    \[ 
        \rho_n(x_1, \dots, x_n) = \frac{\mathrm d (\mu_\eta^n )}{\mathrm d (\lambda^{\otimes n})}(x_1, \dots, x_n).
    \]
    Equivalently, a point process $\eta$ is said to have correlation functions $(\rho_n)_{n \geq 1}$ if for all disjoint bounded Borel subsets $A_1, \dots, A_n$ of $E$, we have
    \[
        \mathbf E \left[ \prod_{k=1}^{n} \xi(A_k) \right] = \int_{A_1 \times \dots \times A_n} \rho_n(x_1, \dots, x_n) \mathrm d\lambda^{\otimes n}.
    \]
\end{definition}

Correlation functions are symmetric and characterize processes. $\rho_1$ is interpreted as the average density of particles with respect to $\lambda$. More generally,
\[
    \rho (x_1, ..., x_n) \mathrm d \lambda (x_1) ... \mathrm d \lambda (x_n),
\]
represents the probability of the event \og For each $ k \in [ \! [1, n] \! ] $, there is a (at least one) point at the
vicitiny $ \mathrm d x_k $ of $ x_k $ \fg. See \cite{PPIntro2003} for an introduction to this topic.

For any compact set $ \lambda \subset E $, we denote by $ L^2(\Lambda, \lambda) $ the Hilbert space the Hilbert space of complex-valued square integrable functions with respect to the restriction of the Radon measure $ \lambda $ on $\Lambda $, equipped with the usual inner-product. Recall that an integral operator $ K : L^2(E, \lambda) \to L^2(E, \lambda)$ is said to have kernel $ k : E^2 \to \mathbf C $ if $K$ is written
\[
    K : f \mapsto  \int k(x, y) f(y) \mathrm d\lambda(y).
\]
If $k$ is in $L^2(E^2, \lambda^{\otimes 2})$, the operator $K$ is a linear bounded operator on $L^2(E, \lambda)$.

\begin{definition} 
    Let now be a compact subset $ \Lambda \subset E $, the restriction of the integral operator $K$ to $\Lambda $ is the integral operator
    \[
        K^\Lambda : f \in L^2(\Lambda, \lambda) \to \int_\Lambda k(x,y) f(y) \mathrm d \lambda (y) \in L^2(\Lambda, \lambda).
    \]
    $ K^\Lambda $ is then a compact operator.
\end{definition}

\begin{definition} 
    The operator $K$ is said to be Hermitian or self-adjoint if its kernel $k$ verifies $ k(x, y) = \overline{k(y, x)}$ for $\lambda^{\otimes 2}$-almost every $(x, y) \in E^2$.
\end{definition}

Equivalently, this means that the integral operators $K^\Lambda$ are self-adjoint for any compact set $\Lambda \subset E$. If $K^\Lambda$ is self-adjoint, by the spectral theorem for self-adjoint and compact operators we have that $L^2(\Lambda, \lambda)$ has an orthonormal basis $(\phi_j^\Lambda)_{j \ge 0}$ of eigenfunctions of $K^\Lambda$. The corresponding eigenvalues $(\lambda_j^\Lambda)_{j\geqslant 0}$ have finite multiplicity (except possibly the zero eigenvalue) and the only possible accumulation point of the eigenvalues is zero. In that case, Mercer's theorem indicates that the kernel $k^\Lambda$ of $K^\Lambda$ can be written
\[ 
    k^\Lambda(x, y) = \sum_{n \geqslant 0} \lambda_n^\Lambda \phi_n^\Lambda(x) \overline{\phi_n^\Lambda(y)},
\]
where the $(\phi_k^\Lambda)_{k \ge 0}$ form a Hilbert basis of $L^2(E, \lambda)$ composed of eigenfunctions of $K^\Lambda$.

Recall that $K$ is positive if its spectrum is included in $\mathbb{R}^+$, and is of trace-class if $$ \sum_{n=1}^\infty |\lambda_n| < \infty,$$ its trace is then $\mathrm{Tr}(K) := \sum_{n=1}^\infty \lambda_n$.

\begin{definition}
    If $K^\Lambda$ is of trace-class for all compacts $\Lambda$, $K$ is said to be locally trace-class.
\end{definition}

\begin{hypothesis}\label{hyp:main-hypothesis}
    Throughout this paper, our kernels will be self-adjoint, locally trace class, with spectrum contained in $[0, 1]$.
\end{hypothesis}

We refer to \cite{ConwayOpTheory2000} and \cite{ConwayFunctAnalysis2019} for further developments on these notions.

\begin{definition} 
    A locally finite and simple point process on $E$ is a determinantal point process if its correlation functions with respect to the reference Radon measure $\lambda$ on $E$ exist and are of the form
    \[
        \rho_n(x_1, \dots, x_n) = \det(k(x_i, x_j))_{1 \le i,j \le n},
    \] where $k$ satisfies Hypothesis~\ref{hyp:main-hypothesis}.
\end{definition}

The dynamics of DPPs are described by the following fundamental theorem.

\begin{theorem}\label{thm:main-dpp-thm}
Under Hypothesis~\ref{hyp:main-hypothesis}, consider Mercer's decomposition of the kernel $k$ of the determinantal point process $\eta$:
\[
k(x,y) = \sum_{k=1}^n \lambda_k \phi_k(x) \overline{\phi_k(y)}.
\]
Here, the eigenvalues are all in $[0,1]$ and can all be chosen in $(0,1]$ ; $n$ is equal to the rank of $K$, which can be either finite or infinite. The $(\phi_n)$ form a Hilbert basis is the space $L^2(E, \lambda)$. Consider then a sequence of independent Bernoulli random variables $(B_k)_{1 \le k \le n}$, and consider the random kernel
\[
k_B(x, y) = \sum_{k=1}^n B_k \phi_k(x) \overline{\phi_k(y)},
\]
Then the point process $\eta_B$ with (random) kernel $k_B$ has the same law as that of $k_B$:
\[
\eta \overset{\text{Law}}{=} \eta_B.
\]
\end{theorem}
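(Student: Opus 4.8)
The plan is to establish the equality in law by verifying that the two point processes have the same correlation functions, which determine the law of a point process by the characterization recalled above. Since $\eta$ is by definition the DPP with kernel $k$, its $n$-th correlation function is $\det(k(x_i,x_j))_{1\le i,j\le n}$. So the heart of the matter is to compute the correlation functions of $\eta_B$ by conditioning on the Bernoulli variables: given a realization $b=(b_k)$ of $B=(B_k)$, the conditional process is a DPP with kernel $k_b(x,y)=\sum_k b_k\phi_k(x)\overline{\phi_k(y)}$, which is a self-adjoint projection-type kernel whose spectrum lies in $\{0,1\}\subset[0,1]$, so it satisfies Hypothesis~\ref{hyp:main-hypothesis} and the conditional correlation functions are $\det(k_b(x_i,x_j))_{1\le i,j\le n}$. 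Taking expectation over $B$, the correlation functions of $\eta_B$ are $\mathbf E_B\big[\det(k_B(x_i,x_j))_{1\le i,j\le n}\big]$, and the task reduces to the purely algebraic identity
\[
    \mathbf E_B\Big[\det\big(k_B(x_i,x_j)\big)_{1\le i,j\le n}\Big] = \det\big(k(x_i,x_j)\big)_{1\le i,j\le n}.
\]

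To prove this identity I would expand the determinant via the Cauchy--Binet formula. Writing $\Phi$ for the (possibly infinite) matrix with entries $\Phi_{ik}=\phi_k(x_i)$ and $D_B=\mathrm{diag}(B_k)$, we have $\big(k_B(x_i,x_j)\big)_{ij} = \Phi D_B \Phi^*$, so Cauchy--Binet gives
\[
    \det\big(\Phi D_B \Phi^*\big) = \sum_{\substack{S\subset\{1,\dots,n\}\\ |S|=n}} \Big(\prod_{k\in S} B_k\Big)\, \big|\det(\Phi_{\cdot,S})\big|^2,
\]
where the sum is over size-$n$ subsets $S$ of indices and $\Phi_{\cdot,S}$ is the corresponding $n\times n$ submatrix. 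Because the $B_k$ are independent Bernoulli variables with $\mathbf E[B_k]=\lambda_k$ and the product $\prod_{k\in S}B_k$ involves distinct indices, $\mathbf E_B\big[\prod_{k\in S}B_k\big]=\prod_{k\in S}\lambda_k$. Substituting back and applying Cauchy--Binet in reverse, now with the diagonal matrix $\mathrm{diag}(\lambda_k)$ in place of $D_B$, yields exactly $\det(\Phi\,\mathrm{diag}(\lambda_k)\,\Phi^*)_{ij} = \det(k(x_i,x_j))_{ij}$, as desired. One should also dispatch the degenerate factors $\lambda_k=0$ (absorbed by rescaling $\phi_k$, as the statement permits choosing eigenvalues in $(0,1]$) and note the elementary fact that a convex combination of determinantal correlation structures with projection kernels is again a determinantal correlation structure with the convex-combination kernel.

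The main technical obstacle is the passage to infinite rank: when $n$ is infinite the matrices $\Phi$ and $D_B$ are infinite, and one must justify convergence of the Cauchy--Binet expansion and the interchange of expectation and the (now infinite) sum over subsets $S$. This is controlled by the trace-class/Hilbert--Schmidt hypothesis: the series $\sum_S |\det(\Phi_{\cdot,S})|^2$ converges (it equals $\det(\Phi\Phi^*)$-type quantities bounded through the Hadamard inequality and the local trace-class assumption on $k$), so Fubini--Tonelli applies to the nonnegative terms after fixing $x_1,\dots,x_n$. Alternatively, and perhaps more cleanly, one can first prove the result for finite-rank kernels and then pass to the limit using that $K^\Lambda$ restricted to a compact $\Lambda$ is trace-class, approximating by finite-rank truncations $\sum_{k\le m}\lambda_k\phi_k\otimes\overline{\phi_k}$ and invoking continuity of correlation functions under trace-norm convergence of kernels. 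Either route reduces the theorem to the finite-dimensional Cauchy--Binet computation together with a standard approximation argument; I expect the write-up to spend most of its length on making this limiting procedure rigorous rather than on the algebraic core.
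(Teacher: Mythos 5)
The paper does not actually prove Theorem~\ref{thm:main-dpp-thm}: it states the result and immediately refers the reader to \cite{Hough2006} and \cite{ShiraiTakahashi2003} for details. Your outline is a faithful reproduction of the standard argument of Hough--Krishnapur--Peres--Vir\'ag (Theorem~7 in \cite{Hough2006}); the key mechanism (condition on the Bernoulli variables, write the conditional correlation functions as determinants of a projection-type kernel, expand by Cauchy--Binet, take the expectation of $\prod_{k\in S}B_k$ using independence, and reassemble by Cauchy--Binet in reverse) is exactly right, and the trace-class truncation is the correct way to pass from finite to infinite rank.

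Two small points deserve fixing. First, you have overloaded the letter $n$: in the theorem statement it denotes the rank of $K$ (the number of eigenfunctions), while in the correlation-function computation it is the number of points $x_1,\dots,x_n$. As a result, the subscript on your Cauchy--Binet sum, ``$S\subset\{1,\dots,n\}$, $|S|=n$'', is literally a single term; it should read $S$ ranging over $n$-element subsets of the eigenfunction index set, which need not equal $\{1,\dots,n\}$. Second, the step ``given a realization $b$, the conditional process is a DPP with kernel $k_b$'' implicitly requires that $k_b$ be a valid, i.e.\ locally trace-class, kernel a.s.; this follows because $\mathbf E[\operatorname{Tr}K_B^\Lambda]=\sum_k\lambda_k\int_\Lambda|\phi_k|^2\,\mathrm d\lambda=\operatorname{Tr}K^\Lambda<\infty$ forces $\operatorname{Tr}K_B^\Lambda<\infty$ a.s.\ for each fixed compact $\Lambda$, and one then takes a countable exhaustion of $E$ by compacts. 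With these tweaks, your proposal is correct and coincides with the proof in the cited references.
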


\begin{corollary}
If the $n$ above is finite, then the point process has a finite number of points almost surely. By independence, the Borel-Cantelli lemmas imply that the number of points is almost surely finite if and only if $K$ is of trace class. Since the integral operators that we will consider will all be locally of trace-class, the restrictions of our DPPs to compact subsets will all have a finite number of points almost surely.
\end{corollary}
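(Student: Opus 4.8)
\emph{Proof plan.} This is a corollary of Theorem~\ref{thm:main-dpp-thm}, so the plan is essentially bookkeeping around the Bernoulli representation together with one use of the Borel--Cantelli lemmas. First I would invoke Theorem~\ref{thm:main-dpp-thm} to replace $\eta$ by $\eta_B$, which has the same law, and then condition on the independent Bernoulli sequence $(B_k)_{1\le k\le n}$. On this conditioning, $k_B=\sum_{k:\,B_k=1}\phi_k\overline{\phi_k}$ is the kernel of the orthogonal projection $P_B$ of $L^2(E,\lambda)$ onto the closed linear span $V_B$ of $\{\phi_k:B_k=1\}$, whose Hilbert-space dimension is $N_B:=\sum_{k=1}^n B_k\in\mathbb{N}\cup\{+\infty\}$.

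The crux is a lemma: a determinantal point process whose kernel is a Hermitian projection of rank $r\in\mathbb{N}\cup\{+\infty\}$ has exactly $r$ points almost surely. Applying it conditionally would give $\#\eta_B=N_B$ a.s., hence $\#\eta$ is distributed as $\sum_{k=1}^n B_k$ (valued in $\mathbb{N}\cup\{+\infty\}$). For $r<\infty$ I would prove the lemma by the classical first-two-moments computation for a DPP with trace-class kernel $K$: $\mathbf E[\#]=\mathrm{Tr}(K)$ and $\mathrm{Var}(\#)=\mathrm{Tr}(K)-\mathrm{Tr}(K^2)=\sum_k\lambda_k(1-\lambda_k)$, which vanishes when all eigenvalues lie in $\{0,1\}$, forcing $\#$ to equal its mean $r$ (see, e.g., \cite{Hough2006}). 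For $r=+\infty$ I would compare $P_B$ with the finite-rank projections $P_B^{(m)}$ onto the span of $\{\phi_k:k\le m,\ B_k=1\}$: since $P_B\ge P_B^{(m)}$ as operators, the standard stochastic domination of determinantal point processes produces a coupling in which the DPP of $P_B$ contains that of $P_B^{(m)}$, the latter having exactly $\#\{k\le m:B_k=1\}$ points; letting $m\to\infty$ gives $\#\eta_B=+\infty$ a.s.

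It then only remains to decide the finiteness of $N_B=\sum_{k=1}^n B_k$. If $n<\infty$ then $N_B\le n$ trivially, which is the first assertion. In general the $(B_k)$ are independent with $\mathbf P(B_k=1)=\lambda_k$, so $\sum_k\mathbf P(B_k=1)=\mathrm{Tr}(K)$: if $\mathrm{Tr}(K)<\infty$ the first Borel--Cantelli lemma gives that almost surely only finitely many $B_k$ equal $1$ (here one could also just note $\mathbf E[\#\eta]=\mathrm{Tr}(K)<\infty$), while if $\mathrm{Tr}(K)=\infty$ the second Borel--Cantelli lemma --- the only point where independence is used --- gives that almost surely infinitely many do; this is the stated equivalence. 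Finally, for a compact $\Lambda\subset E$, the correlation functions of the restriction of $\eta$ to $\Lambda$ are those of $\eta$ restricted to $\Lambda^n$, i.e.\ the determinants built from $k^\Lambda$, so this restriction is the determinantal point process with kernel $K^\Lambda$; since $0\le K^\Lambda\le I$ it satisfies Hypothesis~\ref{hyp:main-hypothesis}, and it is of trace class by the local trace-class assumption, so the equivalence just proved shows it has finitely many points almost surely. The only genuine obstacle is the projection-rank lemma, and specifically its infinite-rank case, which rests on the monotone-coupling / stochastic-domination property of determinantal point processes; granting that, everything else is Borel--Cantelli and bookkeeping.
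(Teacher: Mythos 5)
The paper itself provides no proof of this corollary; it simply states it after Theorem~\ref{thm:main-dpp-thm} and refers the reader to \cite{Hough2006} and \cite{ShiraiTakahashi2003}, so there is nothing in the source to compare against line by line. Your argument is, however, correct and is essentially the standard one from \cite{Hough2006}: conditionally on the Bernoulli outcomes, $k_B$ is a Hermitian projection of rank $N_B=\sum_k B_k$, a DPP with a rank-$r$ projection kernel has exactly $r$ points almost surely (mean $\mathrm{Tr}(K)=r$ and variance $\mathrm{Tr}(K-K^2)=0$ for $r<\infty$; a monotone coupling argument for $r=\infty$), so $\#\eta\overset{\text{Law}}{=}\sum_k B_k$ with $B_k\sim\mathrm{Ber}(\lambda_k)$ independent, and the Borel--Cantelli dichotomy in $\sum_k\lambda_k=\mathrm{Tr}(K)$ gives the stated equivalence; the restriction to a compact $\Lambda$ has kernel $K^\Lambda$, a positive contraction of trace class, which closes the last claim. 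You also correctly flag the one non-elementary ingredient, namely the infinite-rank projection case, which needs the stochastic-domination property of DPPs (or some equivalent comparison argument) rather than just a first–second moment computation; note as well that for the ``trace class $\Rightarrow$ a.s.\ finite'' direction, $\mathbf E[\#\eta]=\mathrm{Tr}(K)<\infty$ already suffices without invoking the projection lemma at all, as you observe parenthetically. In short: the proof is sound, slightly more machinery than the minimum for the ``if'' direction, exactly the right machinery for the ``only if'' direction, and it fills in a proof the paper chose to delegate to the references.
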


For more details, we refer to \cite{Hough2006} and \cite{ShiraiTakahashi2003}.

\begin{definition}
    The Bergman determinantal point process is the determinantal point process on the open unit disc centered at the origin of the complex plane with kernel
\[
k(x,y) = \frac{1}{\pi} \frac{1}{(1 - x\bar{y})^2}.
\]
\end{definition}
See \cite{ZerosGaussianPowerSeries} and \cite{Hough2009ZerosGAF} for a thorough study.

The key motivation of this article is the algorithm that was introduced and studied in \cite{DecreusefondVergne2015}, \cite{DecreusefondMoroz2021}. It is recalled down below. Its goal is to simulate a DPP restricted to a ball centered at the origin and of radius $R$. It assumes that the set
\[
I = \{n \ge 0, B_n = 1\},
\]
of "active" Bernoulli random variables has been computed, and computes the positions of the points $(X_k)_{k \in I}$. Recall that this makes sense because restricting a DPP to a compact reduces the number of points to almost surely finite.

\begin{algorithm}
    \caption{Sampling of the locations of the points given the set $I$ of active Bernoulli random variables}
    \begin{algorithmic}[1]
    \State \textbf{Input :} $R, I$
    \State \textbf{Output :} The positions of the points $(X_i)_{i \in I}$
    \State Let $\varphi_I^R(x) = (\varphi_i^R(x), i \in I)$
    \State Draw $X_1$ from the distribution with density $\|\varphi_I^R(x)\|_{\mathbf{C}^{|I|}}^2 / |I|$
    \State $e_1 \leftarrow \frac{\varphi_I^R(X_1)}{\|\varphi_I^R(X_1)\|_{\mathbf{C}^{|I|}}}$
    \For{$i \leftarrow 2$ to $|I|$}
        \State Draw $X_i$ from the distribution with density
        \[ p_i(x) = \frac{1}{|I| - i + 1} \left( \|\varphi_I^R(x)\|_{\mathbf{C}^{|I|}}^2 - \sum_{k=1}^{i-1} |\langle e_k, \varphi_I^R(x) \rangle|^2 \right) \]
        \State $u_i \leftarrow \varphi_I^R(X_i) - \sum_{k=1}^{i-1} \langle e_k, \varphi_I^R(X_i) \rangle e_k$
        \State $e_i \leftarrow \frac{u_i}{\|u_i\|_{\mathbf{C}^{|I|}}}$
    \EndFor
    \end{algorithmic}
\end{algorithm}
  
Since simulating a countable infinity of Bernoulli random variables is unfeasible, we introduce
the truncation to $N$ eigenvalues of the restricted DPP, which is defined by its kernel
\[
k_N^\Lambda = \sum_{k=0}^{N-1} \lambda_n^\Lambda \phi_n^\Lambda(x) \overline{\phi_n^\Lambda(y)},
\]
it is the only one that's numerically feasible.

\section{The Bergman DPP on a disc}

The big question is now to know to what extent this truncation greatly affects the simulation of the DPP law (or if the error is only a minor issue). We provide the theoretical results that answer the last question in the negative for the Bergman DPP (and partially for general DPPs). This is the question that was asked in \cite{Decreusefond2016}.

\begin{notation}
    Throughout this paper and unless expressly stated otherwise, $R$ and $r$ denote real
numbers in the interval $(0, 1)$ such that $r < R$.
\end{notation}

Let us first contruct the restriction of the Bergman DPP to a compact ball centered at the origin
with radius $R$.

\begin{theorem} Denote $\mathcal{B}(0, R)$ the compact ball centered at 0 with radius $R$. Mercer's decomposition of the kernel $k^R(x,y)$ of the Bergman determinantal point process restricted to
$\mathcal{B}(0, R)$ is
\[
k^R(x,y) = \sum_{n \ge 0} \lambda_n^R \phi_n^R(x) \overline{\phi_n^R(y)},
\]
where the eigenvalues are
\[
\lambda_k^R = R^{2k+2},
\]
and the eigenfunctions
\[
\phi_k^R : x \mapsto \sqrt{\frac{k+1}{\pi}} \frac{1}{R^{k+1}} x^k.
\]
\end{theorem}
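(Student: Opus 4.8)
The plan is to exploit the power‑series expansion of the Bergman kernel together with the orthogonality of monomials on a disc centred at the origin; the statement then essentially writes itself.

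First I would use the elementary identity $\frac{1}{(1-z)^2}=\sum_{k\ge 0}(k+1)z^k$, valid for $|z|<1$, applied with $z=x\bar y$. This is legitimate because $x,y$ range over $\mathcal B(0,R)$ with $R<1$, so $|x\bar y|\le R^2<1$, and in fact the M‑test gives uniform convergence of $\sum_{k\ge0}(k+1)(x\bar y)^k$ on $\mathcal B(0,R)^2$. One obtains
\[
k(x,y)=\frac1\pi\sum_{k\ge0}(k+1)(x\bar y)^k=\sum_{k\ge0}R^{2k+2}\left(\sqrt{\tfrac{k+1}{\pi}}\,\tfrac{x^k}{R^{k+1}}\right)\overline{\left(\sqrt{\tfrac{k+1}{\pi}}\,\tfrac{y^k}{R^{k+1}}\right)}=\sum_{k\ge0}R^{2k+2}\,\phi_k^R(x)\,\overline{\phi_k^R(y)},
\]
so the claimed formula holds as a pointwise (indeed locally uniform) identity. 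Next I would check that $(\phi_k^R)_{k\ge0}$ is orthonormal in $L^2(\mathcal B(0,R),\lambda)$, $\lambda$ being Lebesgue measure: in polar coordinates $\int_{\mathcal B(0,R)}x^k\overline{x^j}\,\mathrm d\lambda=\int_0^R\int_0^{2\pi}\rho^{k+j+1}e^{i(k-j)\theta}\,\mathrm d\theta\,\mathrm d\rho$, the angular integral equals $2\pi\delta_{kj}$, and for $k=j$ the radial integral gives $\|x^k\|^2=\frac{\pi R^{2k+2}}{k+1}$, which is exactly the normalisation built into $\phi_k^R$.

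From orthonormality and the series identity one then gets, by integrating the series term by term against $\phi_j^R$ (the uniform convergence above justifies the interchange), that $K^R\phi_j^R=R^{2j+2}\phi_j^R$, so each $\phi_j^R$ is an eigenfunction with eigenvalue $R^{2j+2}\in(0,1)$. To see these exhaust the non‑zero spectrum — so that the displayed sum is genuinely the Mercer decomposition — I would either note that any $L^2(\mathcal B(0,R),\lambda)$ function orthogonal to all $\phi_k^R$ is annihilated by $K^R$ (again by term‑by‑term integration of the kernel series), or compute the trace
\[
\operatorname{Tr}(K^R)=\int_{\mathcal B(0,R)}\frac1\pi\frac{\mathrm d\lambda(x)}{(1-|x|^2)^2}=\frac{R^2}{1-R^2}=\sum_{k\ge0}R^{2k+2},
\]
and invoke positivity of $K^R$ (Hypothesis~\ref{hyp:main-hypothesis}) to conclude that no eigenvalue is missing; the finiteness of this trace also confirms that $K^R$ is trace class. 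Mercer's theorem (the kernel $k^R$ is continuous on the compact $\mathcal B(0,R)^2$ and $K^R$ is positive and trace class) then packages everything into the stated form.

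The only mildly delicate point is the interchange of the infinite sum with the integral, but as noted this is controlled by the uniform bound $|x\bar y|\le R^2<1$ on $\mathcal B(0,R)^2$ and hence is routine; the rest of the argument is entirely elementary, so I do not expect a genuine obstacle here.
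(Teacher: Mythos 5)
Your proposal follows essentially the same route as the paper: expand $\frac{1}{(1-x\bar y)^2}$ as a power series, regroup to exhibit the claimed $\lambda_k^R\phi_k^R(x)\overline{\phi_k^R(y)}$, and verify orthonormality of the $\phi_k^R$ on $L^2(\mathcal B(0,R))$ by the polar-coordinate computation $\int_{\mathcal B(0,R)}z^n\bar z^m\,\mathrm d\lambda = 2\pi\delta_{n,m}\,R^{2n+2}/(2n+2)$. You go a bit further than the paper's (rather terse) proof by also confirming that $K^R\phi_j^R=R^{2j+2}\phi_j^R$ via term-by-term integration and by verifying completeness of the eigenfamily through the trace identity $\operatorname{Tr}(K^R)=R^2/(1-R^2)=\sum_k R^{2k+2}$, steps the paper implicitly leaves to the reader; these additions are correct and strengthen the argument.
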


\begin{proof} 
    
We have, for the (original) Bergman kernel
\[
k(x,y) = \frac{1}{\pi} \frac{1}{(1 - x\bar{y})^2},
\]
using $\frac{1  }{(1-u)^2} = \sum_{k \ge 0} (k+1)u^k$ for all $u \in \mathcal{B}(0,R)$, we have
\begin{align*}
\sum_{k \ge 0} \frac{k+1}{\pi} x^k \bar{y}^k &= \sum_{k \ge 0} R^{2k+2} \frac{k+1}{\pi} \frac{x^k}{R^{k+1}} \frac{\bar{y}^k}{R^{k+1}} \\
&= \sum_{n \ge 0} \lambda_n^R \phi_n^R(x) \overline{\phi_n^R(y)},
\end{align*}
taking $\phi_n^R$ and $\lambda_n^R$ as in the theorem statement. Because we are restricting onto $L^2(\mathcal{B}(0,R), \lambda)$, we ought to consider the inner product on this space and not $L^2(\mathcal{B}(0,1), \lambda)$. We have
\[
\langle \phi_n^R, \phi_m^R \rangle_{L^2(\mathcal{B}(0,R))} = \sqrt{\frac{(n+1)(m+1)}{\pi}} \frac{1}{R^{n+m+2}} \int_{\mathcal{B}(0,R)} z^n \bar{z}^m \mathrm dz.
\]
However,
\[
\int_{\mathcal{B}(0,R)} z^n \bar{z}^m dz = \int_0^R \int_0^{2\pi} r^{n+m} e^{i\theta(n-m)} r \mathrm dr d\theta = 2\pi \delta_{n,m} \int_0^R r^{n+m+1} \mathrm dr = 2\pi \delta_{n,m} \frac{R^{2n+2}}{2n+2}.
\]

Consequently,
\[
\langle \phi_n^R, \phi_m^R \rangle_{L^2(\mathcal{B}(0,R))} = \delta_{n,m} \frac{\sqrt{(n+1)(m+1)}}{\pi} \frac{1}{R^{n+m+2}} 2\pi \frac{R^{2n+2}}{2n+2} = \delta_{n,m}.
\]
This prooves that this family is an orthonormal family of eigenfunctions of the integral operator $K^R$ associated to $k^R$, and we have hence found its Mercer decomposition.
\end{proof}

\begin{remark} 
    The fact that the eigenvalues and eigenfunctions of the Bergman DPP are explicitly computable makes it very manipulatable. Let us mention that restricting any DPP on any compact hardly ever gives birth to such well-behaving results. The only DPP that was ever discovered to enjoy such properties if the Ginibre DPP as shown in \cite{DecreusefondMoroz2021}.
\end{remark}

We will now make use of Optimal transport tools and show that, when choosing the right number of points (which we will exhibit), the law induced by the truncated version of this kernel is close (in the Wasserstein sense) to the non-truncated restriction.

Let $X$ and $Y$ be two Polish spaces. Let $\mu$ and $\nu$ be probability measures on $X$ and $Y$ respectively. Denote $\Pi(\mu, \nu)$ the set of probability measures on $X \times Y$, the first marginal of which is $\mu$ and the second $\nu$. If $c$ is a lower semi-continuous function from $X \times Y$ to $\mathbb{R}^+$, the Monge-Kantorovitch problem asks to find
\[
\inf_{\gamma \in \Pi(\mu,\nu)} \int_{X \times Y} c(x,y) \mathrm d\gamma(x,y).
\]
We refer to \cite{Villani2021} and \cite{Villani2009} for proper introductions to the topic of Optimal transport.

\begin{definition} Observing that the cardinality of the symmetric difference $d(\xi, \zeta) = |\xi \Delta \zeta|$ induces a distance on the set of configurations on a subset of $\mathbf{C}$, the Kantorovitch-Rubinstein distance is the Wasserstein-1 distance induced by $d$, that is
\[
\mathcal{W}_{KR}(\mu, \nu) = \inf_{\substack{\text{law}(\xi)=\mu \\ \text{law}(\zeta)=\nu}} \mathbf{E}(|\xi \Delta \zeta|) = \inf_{\substack{\text{law}(\xi)=\mu \\ \text{law}(\zeta)=\nu}} \mathbf{E}(d(\xi, \zeta)).
\]
It is a distance on the set of point process laws. See [5] for a thorough study of Wasserstein distances on configuration spaces. The following result exhibits a good compromise as to the number of points the Bergman ought to be truncated to.
\end{definition}

\begin{theorem}\label{thm:main-analog}
Let
\[
N_R := \sum_{n=0}^\infty R^{2n+2} = \frac{R^2}{(1-R)(1+R)}.
\]
Denote $\mathfrak{S}^R$ the law of the restricted Bergman to a compact ball of radius $R$ centered at 0 and $\mathfrak{S}_\alpha^R$ the law of its truncation to $\alpha$ eigenvalues.

If we truncate it to $\beta N_R$ eigenvalues, we have
\begin{equation}
\mathcal{W}_{KR}(\mathfrak{S}^R, \mathfrak{S}_{\beta N_R}^R) \leqslant N_R e^{-2\beta g(R)}
\end{equation}
where
\[
g(R) = \frac{R^2}{1+R}.
\]
\end{theorem}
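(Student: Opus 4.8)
The plan is to bound $\mathcal{W}_{KR}(\mathfrak{S}^R,\mathfrak{S}_{\beta N_R}^R)$ by exhibiting one explicit coupling of the two point processes, to reduce the expected cardinality of their symmetric difference to the tail $\sum_{n\ge\alpha}\lambda_n^R$ of the eigenvalue sequence, and finally to estimate that tail with the elementary inequality $\ln R\le R-1$. Here $\alpha$ denotes the number $\beta N_R$ of eigenvalues kept; by the Mercer decomposition established above, $\lambda_n^R=R^{2n+2}$ and the truncated kernel is $k_\alpha^R=\sum_{n=0}^{\alpha-1}R^{2n+2}\phi_n^R(x)\overline{\phi_n^R(y)}$.

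First I would put both processes on a common probability space through the Bernoulli representation of Theorem~\ref{thm:main-dpp-thm}. Take independent random variables $(B_n)_{n\ge0}$ with $B_n\sim\mathrm{Bernoulli}(R^{2n+2})$, set $I=\{n:B_n=1\}$ and $I_\alpha=I\cap\{0,\dots,\alpha-1\}$, both almost surely finite by Borel-Cantelli since $\sum_nR^{2n+2}=N_R<\infty$. Re-using the same $B_0,\dots,B_{\alpha-1}$ in the representation of $\mathfrak{S}_\alpha^R$ (whose first $\alpha$ eigenvalues coincide with those of $k^R$), then conditionally on $(B_n)$ the two processes are the projection determinantal processes attached to the nested subspaces $\mathrm{span}\{\phi_n^R:n\in I_\alpha\}\subseteq\mathrm{span}\{\phi_n^R:n\in I\}$ of $L^2(\mathcal{B}(0,R),\lambda)$. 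Invoking the stochastic domination of determinantal processes with ordered (here even co-diagonalizable) kernels, I would couple so that the configuration $\zeta$ of the smaller process is contained in the configuration $\xi$ of the larger one. Since a projection DPP on a $k$-dimensional subspace has exactly $k$ points almost surely, this gives $|\xi\,\Delta\,\zeta|=|\xi|-|\zeta|=|I|-|I_\alpha|=\sum_{n\ge\alpha}B_n$, and, the Kantorovitch-Rubinstein distance being the infimum of $\mathbf{E}|\xi\,\Delta\,\zeta|$ over all couplings,
\[
\mathcal{W}_{KR}(\mathfrak{S}^R,\mathfrak{S}_\alpha^R)\ \le\ \mathbf{E}\Big[\sum_{n\ge\alpha}B_n\Big]\ =\ \sum_{n\ge\alpha}R^{2n+2}.
\]

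It then remains to estimate this tail. Factoring out $R^{2\alpha}$,
\[
\sum_{n\ge\alpha}R^{2n+2}=R^{2\alpha}\sum_{m\ge0}R^{2m+2}=R^{2\alpha}\,N_R ,
\]
so plugging in $\alpha=\beta N_R$ and using $\ln R\le R-1$ on $(0,1)$,
\[
R^{2\beta N_R}=\exp\!\big(2\beta N_R\ln R\big)\ \le\ \exp\!\big(-2\beta N_R(1-R)\big)=\exp\!\left(-2\beta\,\frac{R^2}{1+R}\right)=e^{-2\beta g(R)},
\]
where the last equalities use $N_R=R^2/((1-R)(1+R))$, hence $N_R(1-R)=R^2/(1+R)=g(R)$. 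Combining the last three displays yields $\mathcal{W}_{KR}(\mathfrak{S}^R,\mathfrak{S}_{\beta N_R}^R)\le N_Re^{-2\beta g(R)}$.

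The one genuinely delicate ingredient is the monotone coupling used above, namely that the projection determinantal process on a subspace $V$ stochastically dominates the one on a subspace $V'\subseteq V$ --- equivalently, at the operator level, that $k_\alpha^R\preceq k^R$ forces a coupling with $\zeta\subseteq\xi$. In the present situation the two kernels are diagonal in the common basis $(\phi_n^R)$, which is the most favourable case (and conditionally on $(B_n)$ the statement reduces to nested projections), but making the inclusion of configurations rigorous, ideally compatibly with the sequential sampling of the algorithm recalled above, is where I expect the main effort to go; everything downstream is an exact identity together with a one-line convexity estimate. A minor technical point is that $\beta N_R$ need not be an integer, in which case one keeps $\lceil\beta N_R\rceil$ eigenvalues, which only decreases $R^{2\alpha}$ and hence preserves the stated bound.
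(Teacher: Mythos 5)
Your proof is correct and follows essentially the same route as the paper's: couple so that the truncated process sits inside the full one, reduce $\mathcal{W}_{KR}$ to the eigenvalue tail $\sum_{n\ge\beta N_R}R^{2n+2}=N_R\,R^{2\beta N_R}$, and conclude with $\ln R\le R-1$ together with the identity $N_R(1-R)=g(R)$. The only difference is that you spell out the monotone-coupling step (stochastic domination of projection determinantal processes on nested subspaces) which the paper asserts in one line; the rest of your argument is identical in substance.
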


\begin{proof}

Consider the coupling of $(\xi_{\beta N_R}^R, \xi^R)$ such that the first random variable is a subset of the second one consisting in the point whose indexes are $n \le \beta N_R$ in Mercer's decomposition. We then have
\[
\mathcal{W}_{KR}(\mathfrak{S}^R, \mathfrak{S}_{\beta N_R}^R) \leqslant \sum_{k=\beta N_R+1}^\infty R^{2k+2}.
\]
Introducing $\varepsilon = 1 - R$,
\begin{align*}
\mathcal{W}_{KR}(\mathfrak{S}^R, \mathfrak{S}_{\beta N_R}^R) &\leqslant \frac{(1-\varepsilon)^2}{\varepsilon(2-\varepsilon)} (1-\varepsilon)^{2\beta N_R} \\
&\leqslant \frac{(1-\varepsilon)^2}{\varepsilon(2-\varepsilon)} e^{2\beta N_R \log(1-\varepsilon)} \\
&\leqslant \frac{(1-\varepsilon)^2}{\varepsilon(2-\varepsilon)} \exp \left( 2\beta \frac{(1-\varepsilon)^2}{\varepsilon(2-\varepsilon)} \log(1-\varepsilon) \right) \\
&\leqslant \frac{(1-\varepsilon)^2}{\varepsilon(2-\varepsilon)} \exp \left( -2\beta \frac{(1-\varepsilon)^2}{2-\varepsilon} \right).
\end{align*}
The proof is complete.
\end{proof}

\begin{remark}
    As a corollary of the previous proof, the two point processes coincide with high probability. The previous proof shows that :
\end{remark}

\begin{proposition}\label{prop:analog-corollary}
We have
\[
\mathbf{P}(\mathfrak{S}^R \neq \mathfrak{S}_{\beta N_R}^R) \leqslant N_R e^{-2\beta \frac{R^2}{1+R}}.
\]
\end{proposition}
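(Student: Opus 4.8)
The plan is to extract the probability bound directly from the coupling already built in the proof of Theorem~\ref{thm:main-analog}. Recall that in that coupling $\xi_{\beta N_R}^R$ is realized as the sub-configuration of $\xi^R$ consisting of those points whose Mercer indices satisfy $n \le \beta N_R$. Under this coupling the two configurations differ exactly when at least one of the Bernoulli variables $B_n$ with $n > \beta N_R$ equals $1$; equivalently, the symmetric difference $\xi^R \Delta \xi_{\beta N_R}^R$ is nonempty precisely on that event. Hence $\mathbf P(\mathfrak S^R \neq \mathfrak S_{\beta N_R}^R) \le \mathbf P\big(|\xi^R \Delta \xi_{\beta N_R}^R| \ge 1\big)$, and since $|\xi^R \Delta \xi_{\beta N_R}^R|$ is $\mathbb N$-valued, Markov's inequality gives $\mathbf P(|\xi^R \Delta \xi_{\beta N_R}^R| \ge 1) \le \mathbf E\big[|\xi^R \Delta \xi_{\beta N_R}^R|\big]$.

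Next I would observe that this expectation is exactly the quantity already estimated in the proof of Theorem~\ref{thm:main-analog}: by Theorem~\ref{thm:main-dpp-thm} the expected number of extra points equals $\sum_{k > \beta N_R} \lambda_k^R = \sum_{k=\beta N_R+1}^\infty R^{2k+2}$ (the expectation of a sum of independent Bernoullis with parameters $\lambda_k^R$), and the same chain of inequalities displayed there bounds this by $N_R e^{-2\beta g(R)}$ with $g(R) = R^2/(1+R)$. Combining the two displays yields the claimed bound.

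The only genuine subtlety — and really the only place one must be slightly careful — is checking that the event $\{\mathfrak S^R \neq \mathfrak S_{\beta N_R}^R\}$ is correctly identified with $\{\xi^R \Delta \xi_{\beta N_R}^R \neq \emptyset\}$ under this particular coupling: one should note that equality of the two random \emph{configurations} (as elements of $\mathfrak N_{\mathcal B(0,R)}$) holds if and only if no index beyond $\beta N_R$ is active, which uses that distinct active eigenfunctions almost surely produce distinct point configurations (a consequence of the projection structure in Algorithm~1, or simply of the fact that adding a point to the conditioned DPP strictly enlarges the configuration). Everything else is a direct reuse of the estimate already carried out, so I expect no real obstacle; the statement is essentially the ``$\mathbf P$'' shadow of the ``$\mathcal W_{KR}$'' bound, obtained for free from the same coupling.
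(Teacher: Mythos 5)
Your proof is correct and follows the same route as the paper's: bound $\mathbf{P}(\mathfrak{S}^R \neq \mathfrak{S}_{\beta N_R}^R)$ by the probability that some $B_k = 1$ for $k > \beta N_R$, then apply a first-moment bound (your Markov step is just the union bound in disguise) to obtain $\sum_{k > \beta N_R} \lambda_k^R$, and finally reuse the geometric-series estimate from Theorem~\ref{thm:main-analog}. The ``subtlety'' you flag about identifying the two events is handled even more simply than you suggest, since under the coupling $\xi_{\beta N_R}^R \subset \xi^R$ with respective cardinalities $|I \cap \{0,\dots,\beta N_R\}|$ and $|I|$, so they coincide iff no index beyond $\beta N_R$ is active, with no appeal to distinctness of eigenfunctions needed.
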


\begin{proof}
Introducing the Bernoulli random variables from Theorem~\ref{thm:main-dpp-thm}, we have
\[
\mathbf{P}(\mathfrak{S}^R \neq \mathfrak{S}_{\beta N_R}^R) \leqslant \mathbf{P}(\exists k > \beta N_R, B_k = 1) \leqslant \sum_{k > \beta N_R} \mathbf{P}(B_k = 1) = \sum_{k > \beta N_R} \lambda_k^R \leqslant N_R e^{-2\beta \frac{R^2}{1+R}}
\]
as wanted.
\end{proof}

\begin{remark}\label{rem:truncation}
    In \cite{DecreusefondMoroz2021}, the authors have chosen to truncate the restricted Ginibre DPP to $N_R = R^2$ eigenvalues as seen in result \eqref{eq:main-result-ginibre} that shows that the truncation error is exponentially small in the deviation $c$ from $N_R$. Though the authors of \cite{DecreusefondMoroz2021} have described this as a "well-known \textit{observation}", it is interesting to observe that this $R^2$ is by no means random and can be theoretically forecasted. This is because turns out to correspond exactly to the expectation of the number of points of the Ginibre DPP. See for yourself : though very interesting to be pointed out, the following proposition is to be found nowhere in the litterature. It motivates to truncate to the expectation and to study the deviation that we have written in Theorem~\ref{thm:main-analog}.
\end{remark}

\begin{proposition} The expected number of points $\mathbf{E}[|\mathfrak{S}_R^G|]$ that will come out from the restricted Ginibre DPP to $\mathcal{B}(0,R)$ is exactly $R^2$ (here, $R$ can be any positive real number).
\end{proposition}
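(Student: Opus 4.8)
\section*{Proof plan}

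The plan is to reduce the statement to the computation of a trace, and then to the elementary fact that the Ginibre kernel has constant diagonal. Recall that for any determinantal point process $\eta$ with kernel $k$ satisfying Hypothesis~\ref{hyp:main-hypothesis}, applying the defining relation of the first correlation function with $A_1$ equal to the whole underlying space gives $\mathbf E[|\eta|]=\int k(x,x)\,\mathrm d\lambda(x)$, which by Mercer's theorem equals $\mathrm{Tr}(K)=\sum_j\lambda_j$ (all terms being nonnegative, so the interchange of sum and integral is legitimate). Applied to $\eta=\mathfrak S_R^G$, whose underlying space is $\mathcal B(0,R)$ and whose kernel is the restriction of the original Ginibre kernel $k$ to $\mathcal B(0,R)\times\mathcal B(0,R)$, this yields
\[
\mathbf E[|\mathfrak S_R^G|]=\mathrm{Tr}(K^R)=\int_{\mathcal B(0,R)}k(x,x)\,\mathrm d\lambda(x),
\]
where I have used that restricting the operator does not change the values of the (continuous, real-analytic) kernel on the diagonal.

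Next I would recall the Ginibre kernel in its normalization with respect to Lebesgue measure $\lambda$ on $\mathbf C$, namely $k(x,y)=\tfrac1\pi\exp\bigl(x\bar y-\tfrac12|x|^2-\tfrac12|y|^2\bigr)$. Its diagonal is $k(x,x)=\tfrac1\pi\exp(|x|^2-|x|^2)=\tfrac1\pi$, a constant. Substituting into the display above and using $\lambda(\mathcal B(0,R))=\pi R^2$ gives $\mathbf E[|\mathfrak S_R^G|]=\tfrac1\pi\cdot\pi R^2=R^2$, which is the assertion (valid for every $R>0$, since no bound on $R$ has been used).

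As a self-contained cross-check that stays entirely within the operator picture of \cite{DecreusefondMoroz2021}, I would also recall that the Mercer decomposition of $K^R$ has eigenvalues $\lambda_n^R=\gamma(n+1,R^2)/n!$, where $\gamma(s,x)=\int_0^x t^{s-1}e^{-t}\,\mathrm dt$ is the lower incomplete gamma function; then
\[
\mathrm{Tr}(K^R)=\sum_{n\ge0}\frac1{n!}\int_0^{R^2}t^n e^{-t}\,\mathrm dt=\int_0^{R^2}e^{-t}\sum_{n\ge0}\frac{t^n}{n!}\,\mathrm dt=\int_0^{R^2}\mathrm dt=R^2,
\]
the interchange being justified by monotone convergence.

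There is no genuinely hard step here. The only points deserving a sentence of justification are: that the expected total number of points of a DPP equals the trace of its kernel operator (Mercer's theorem plus the $n=1$ correlation-function identity, under Hypothesis~\ref{hyp:main-hypothesis}); and that the diagonal of the Ginibre kernel is the constant $1/\pi$, which is immediate from the explicit formula once the normalization is fixed. Everything else is the computation of the area of a disc.
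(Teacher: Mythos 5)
Your proof is correct, and your primary argument takes a genuinely different, more elementary route than the paper. The paper goes through the explicit Mercer spectrum of the restricted Ginibre operator, citing $\lambda_n^R=\gamma(n+1,R^2)/n!$ from \cite{DecreusefondMoroz2021}, and then sums the series by interchanging $\sum$ and $\int_0^{R^2}$. You instead observe that for any locally trace-class DPP one has $\mathbf E[|\eta|]=\mathrm{Tr}(K^R)=\int_{\mathcal B(0,R)}k(x,x)\,\mathrm d\lambda(x)$ via the first correlation function, and that the Ginibre kernel's diagonal is the constant $1/\pi$, so the trace is simply $\lambda(\mathcal B(0,R))/\pi=R^2$. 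Your route is shorter and avoids any reliance on the explicit eigenvalue formula; it makes transparent that the result is just a statement about the constant density of points and the area of the disc, and it works verbatim for any measurable restriction region, not just balls. The paper's eigenvalue computation, on the other hand, has the merit of exhibiting the individual Bernoulli probabilities $\lambda_n^R$ entering the stochastic dynamics of Theorem~\ref{thm:main-dpp-thm}, which is what is actually used later when one truncates. You in fact include the paper's computation as a cross-check, so you have both arguments; as written, the proposal is sound.
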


\begin{proof}

According to the fundamental theorem of DPPs (Theorem~\ref{thm:main-dpp-thm} in this paper), this expectation is $\sum\limits_{n \ge 0} \lambda_n^R$ where $\lambda_n^R$ denotes the $n$-th eigenvalue of the restricted Ginibre integral operator.

We have (see \cite{DecreusefondMoroz2021}) $\lambda_n^R = \frac{\gamma(n+1, R^2)}{n!}$ where $\gamma$ stands for the incomplete gamma function $\gamma(n,x) = \int_0^x t^{n-1}e^{-t} \mathrm dt$.

We have
\[
\sum_{n=0}^\infty \lambda_n^R = \sum_{n=0}^\infty \frac{1}{n!} \gamma(n+1, R^2) = \sum_{n=0}^\infty \frac{1}{n!} \int_0^{R^2} t^{n+1-1} e^{-t} \mathrm dt = \int_0^{R^2} \sum_{n=0}^\infty \frac{1}{n!} t^n e^{-t} \mathrm dt = \int_0^{R^2} \mathrm dt = R^2,
\]
as wanted.
\end{proof}

\begin{remark} The behavior of the rational function $\frac{R^2}{1+R}$ inside the exponential function is to be nicely bounded in the neighborhood of $R \to 1$. In other words, the bound (the deviation) is indeed exponential (exponentially small) in $\beta$. For instance, for $R \in [0.9, 1]$, we have $g(R) = \frac{R^2}{1+R} \ge 0.42$ so we get $\leqslant \dots \exp(-0.98\beta)$. For $R \in [0.99, 1]$, $g(R) \ge 0.493$, so $\leqslant \dots \exp(-0.986\beta)$, and so on. (Since $g(R)$ is continuous and vanishes at 0, it cannot be bounded from below on $[0,1]$ by a constant $K_0 > 0$. This is why here have to consider subintervals that do not touch 0 and see the bounds on those).
\end{remark}

\begin{remark} We have $N_R \xrightarrow[R \to 1^-]{} +\infty$. This comforts in thinking that the number of points is well-chosen, since as $R \to 1^-$, we seem to find a restriction that comes closer and closer to the original Bergman point process.
\end{remark}

The following results further confirm this idea.

\begin{proposition} Denoting $\mathfrak{S}^R$ the law of the restricted Bergman to a compact ball of radius $R$ centered at the origin and $\mathfrak{S}_N$ the law of its truncation to $N$ eigenvalues, we have
\[
\mathfrak{S}_N^R \xrightarrow[N \to \infty]{} \mathfrak{S}^R,
\]
in distribution.
\end{proposition}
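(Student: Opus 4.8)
The plan is to show convergence in distribution of point processes by exhibiting an explicit coupling, exactly in the spirit of the proof of Theorem~\ref{thm:main-analog}. For each $N$, Theorem~\ref{thm:main-dpp-thm} represents $\mathfrak{S}_N^R$ via Bernoulli variables $B_0^{(N)}, \dots, B_{N-1}^{(N)}$ with parameters $\lambda_0^R, \dots, \lambda_{N-1}^R$, and $\mathfrak{S}^R$ via an infinite independent sequence $B_0, B_1, \dots$ with parameters $(\lambda_k^R)_{k\ge 0}$; once the active set $I$ is fixed, the locations are drawn by the same Algorithm~1. So I would couple everything on one probability space: take a single sequence $(B_k)_{k\ge 0}$ of independent Bernoullis with the parameters $\lambda_k^R = R^{2k+2}$, let $\xi^R$ be the configuration built from all of them, and let $\xi_N^R$ be the configuration built from $B_0, \dots, B_{N-1}$ only (with the same driving randomness fed into Algorithm~1, restricted to the truncated active set). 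This makes $\xi_N^R$ a sub-configuration of $\xi^R$, with $\xi^R \setminus \xi_N^R$ supported on the indices $k \ge N$ that are active.

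First I would estimate $\mathbf{E}\,|\xi^R \Delta \xi_N^R|$. Since the coupling is a nesting, $|\xi^R \Delta \xi_N^R| = |\xi^R| - |\xi_N^R| = \sum_{k\ge N} B_k$, whose expectation is $\sum_{k\ge N} R^{2k+2} \to 0$ as $N \to \infty$ because the series $N_R = \sum_{n\ge 0} R^{2n+2}$ converges (here $R < 1$ is fixed). Hence $\mathcal{W}_{KR}(\mathfrak{S}^R, \mathfrak{S}_N^R) \le \mathbf{E}\,|\xi^R \Delta \xi_N^R| \to 0$. Then I would invoke the fact — recalled in the excerpt, with reference [5] — that convergence in the Kantorovitch–Rubinstein (Wasserstein-1) distance on the space of configurations metrizes, or at least implies, convergence in distribution of point processes; this gives $\mathfrak{S}_N^R \to \mathfrak{S}^R$ in distribution and in fact in the stronger $\mathcal{W}_{KR}$ sense.

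Alternatively, and perhaps more self-containedly, I would argue directly with correlation functions: the truncated kernel $k_N^R(x,y) = \sum_{k=0}^{N-1} R^{2k+2}\phi_k^R(x)\overline{\phi_k^R(y)}$ converges pointwise (indeed uniformly on compacts, and in $L^2(\mathcal{B}(0,R)^2)$) to $k^R$ as $N\to\infty$, since it is a partial sum of an absolutely convergent Mercer series; therefore every correlation function $\rho_n^N = \det(k_N^R(x_i,x_j))$ converges to $\rho_n = \det(k^R(x_i,x_j))$, and with a domination argument (all these kernels are bounded on the compact ball uniformly in $N$) one upgrades this to convergence of the Laplace functionals or of the finite-dimensional distributions, which characterizes convergence in law for point processes. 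The main obstacle is purely bookkeeping: making sure the chosen mode of convergence of point processes (via $\mathcal{W}_{KR}$, via correlation functions, or via Laplace functionals) is correctly matched to a citable theorem, and that the coupling through Algorithm~1 is legitimately "the same randomness" on both sides — but since the truncated active set is literally a subset of the full one and Algorithm~1 is driven by the same uniform inputs, this is routine. I would therefore present the coupling argument as the main proof, noting that it in fact yields the quantitative statement $\mathcal{W}_{KR}(\mathfrak{S}^R,\mathfrak{S}_N^R)\le \sum_{k\ge N} R^{2k+2}$, of which the stated convergence is an immediate consequence.
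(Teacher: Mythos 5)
Your main argument is correct, and it takes a genuinely different (and in fact stronger) route than the paper's. The paper verifies the hypotheses of the Shirai--Takahashi convergence theorem (cited as Theorem~\ref{thm:cvg-in-law-dpp}): it bounds $\|k_N^R - k^R\|_{\infty,\Lambda} \leqslant \frac{1}{\pi}\sum_{k\ge N}(k+1)R^{2k}$ on compacts $\Lambda \subset \mathcal B(0,R)$ and invokes that uniform-on-compacts convergence of the kernels implies weak convergence of the DPP laws. You instead reuse the nesting coupling from the proof of Theorem~\ref{thm:main-analog}, getting $\mathbf E\,|\xi^R \Delta \xi_N^R| = \sum_{k \ge N} R^{2k+2} \to 0$, and conclude $\mathcal W_{KR}(\mathfrak S^R, \mathfrak S_N^R) \to 0$. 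Since $|\xi \Delta \zeta|$ is a nonnegative integer, this coupling gives $\mathbf P(\xi_N^R \neq \xi^R) \leqslant \mathbf E\,|\xi^R \Delta \xi_N^R| \to 0$, so your argument actually yields convergence in total variation, which is strictly stronger than the weak convergence asserted and proved in the paper. This is a nice shortcut, and indeed the paper itself observes in a subsequent remark that $\mathcal W_{KR}(\mathfrak S_N^R, \mathfrak S^R) \to 0$ follows from the proof of Theorem~\ref{thm:main-analog}, without noticing this renders the Shirai--Takahashi reduction redundant for the present proposition. Your ``alternative'' argument via uniform convergence of the truncated kernel on compacts is essentially the paper's route (you phrase it through correlation functions and Laplace functionals, the paper through the cited Proposition~3.10 of \cite{ShiraiTakahashi2003}); either way it requires some machinery, whereas the coupling argument is self-contained once Theorem~\ref{thm:main-dpp-thm} is in hand. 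One small caveat: when you say ``$\mathcal W_{KR}$ metrizes convergence in distribution,'' this is not quite the right justification --- the KR topology here is strictly finer than the weak topology on configuration laws --- but it \emph{implies} weak convergence, and in your case you have an explicit coupling so no metrization statement is needed at all; the total-variation bound above suffices.
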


\begin{proof}

The goal is to reduce this result to Theorem~\ref{thm:cvg-in-law-dpp}. Denote $\|\cdot\|_\infty$ the norm of uniform convergence. Let $\Lambda \subset \mathcal{B}(0,R)$ be compact. Denote $k_N^R$ the truncated kernel to $N$ eigenvalues and $k^R$ the asymptotic one. We have
\[
\|k_N^R - k^R\|_{\infty, \Lambda} = \left\| \frac{1}{\pi} \sum_{k=N}^\infty (k+1) x^k \bar{y}^k \right\|_{\infty, \Lambda}.
\]
Observe that $\Lambda \subset \mathcal{B}(0,R)$ implies that
\begin{align*}
\|k_{N}^R - k^R\|_{\infty, \Lambda} &\leqslant \frac{1}{\pi} \sum_{k=N}^\infty (k+1) \|x^k \bar{y}^k\|_{\infty, \Lambda} \\
&\leqslant \frac{1}{\pi} \sum_{k=N}^\infty (k+1) R^{2k} \xrightarrow[N \to \infty]{} 0.
\end{align*}
The proof is complete.
\end{proof}

\begin{proposition} Denote $\mathfrak{S}$ the original Bergman DPP and consider truncations $ \mathfrak{S}_{N_R}^R $ to $ N_R $ eigenvalues of its restriction to the disc or radius $R$. Provided that $ N_R \xrightarrow[R \to 1^-]{} +\infty $, we have
\[
\mathfrak{S}_{N_R}^R \xrightarrow[R \to 1^-]{} \mathfrak{S},
\]
in distribution.
\end{proposition}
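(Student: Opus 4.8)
The plan is to reduce the claim to the earlier convergence-in-law criterion for DPPs (the unnamed Theorem~\ref{thm:cvg-in-law-dpp} invoked in the previous proposition), via a triangle-type argument that splits the discrepancy between $\mathfrak{S}_{N_R}^R$ and $\mathfrak{S}$ into a \emph{restriction} error and a \emph{truncation} error. More precisely, I would first compare the truncated kernel $k_{N_R}^R$ on a fixed compact $\Lambda \subset \mathcal B(0,1)$ with the full restricted kernel $k^R$, and then compare $k^R$ with the original Bergman kernel $k$; if both comparisons tend to $0$ uniformly on $\Lambda$ as $R \to 1^-$, the DPP convergence criterion delivers the result.

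First I would fix an arbitrary compact $\Lambda \subset \mathcal B(0,1)$, so that $\Lambda \subset \mathcal B(0,\rho)$ for some $\rho < 1$; for $R \in (\rho,1)$ we have $\Lambda \subset \mathcal B(0,R)$, and on this set all the relevant power series converge geometrically. The truncation error is controlled exactly as in the preceding proposition:
\[
\|k_{N_R}^R - k^R\|_{\infty,\Lambda} \leqslant \frac{1}{\pi}\sum_{k \geqslant N_R}(k+1)\rho^{2k},
\]
which tends to $0$ as $R \to 1^-$ precisely because $N_R \to +\infty$ by hypothesis (the tail of a convergent series). The restriction error is
\[
\|k^R - k\|_{\infty,\Lambda} = \left\|\frac{1}{\pi}\sum_{k\geqslant 0}(k+1)\bigl(1 - R^{2k+2}\bigr)\cdot 0\right\|
\]
— more carefully, since $k^R$ and $k$ have the \emph{same} Mercer series $\frac1\pi\sum_k (k+1)x^k\bar y^k$ as functions (the restriction only changes the Hilbert space on which the operator acts, not the pointwise values of the kernel on $\mathcal B(0,R)^2$), this term is in fact identically zero on $\Lambda$ once $R > \rho$. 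So the only genuine estimate needed is the truncation tail bound, and one may even invoke Theorem~\ref{thm:main-analog} or Proposition~\ref{prop:analog-corollary} directly: $\mathcal W_{KR}(\mathfrak S^R,\mathfrak S_{N_R}^R) \to 0$ together with $\mathfrak S^R \to \mathfrak S$ (the previous two propositions, the latter implicit) gives the conclusion by the triangle inequality for $\mathcal W_{KR}$, since $\mathcal W_{KR}$ metrizes a topology at least as strong as convergence in distribution on configuration spaces.

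The step I expect to be the main (minor) obstacle is the bookkeeping around \emph{uniformity in $R$}: the compact $\Lambda$ must be chosen independently of $R$, and one must be careful that $N_R \to \infty$ is used only to kill the tail $\sum_{k \geqslant N_R}(k+1)\rho^{2k}$, with $\rho$ depending on $\Lambda$ but not on $R$. A secondary subtlety is justifying that the cited DPP convergence criterion (convergence of kernels uniformly on compacts, plus the trace-class control implied by Hypothesis~\ref{hyp:main-hypothesis}) applies with a moving sequence of restricted operators $K^R$ rather than a single ambient operator; this is handled by noting $k_{N_R}^R \to k$ uniformly on each fixed $\Lambda$ and that $k$ is itself a valid locally-trace-class kernel with spectrum in $[0,1)$, so the hypotheses of the criterion are met in the limit. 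Modulo these routine verifications, the proof is a two-line application of the triangle inequality for $\mathcal W_{KR}$ to the two preceding propositions.
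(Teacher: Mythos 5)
Your primary argument is correct and essentially the paper's proof: fix $\Lambda \subset \mathcal B(0,\rho)$ with $\rho < 1$, note that the restricted kernel $k^R$ coincides with the original $k$ pointwise on $\Lambda^2$ once $R > \rho$ (so there is no "restriction error" to estimate), and bound the tail $\|k_{N_R}^R - k\|_{\infty,\Lambda} \leqslant \frac{1}{\pi}\sum_{j \geqslant N_R}(j+1)\rho^{2j} \to 0$ since $N_R \to \infty$ and the series converges; Theorem~\ref{thm:cvg-in-law-dpp} then applies.

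Your closing shortcut via the triangle inequality for $\mathcal W_{KR}$, however, does not hold under the hypothesis $N_R \to +\infty$ alone, and the paper in fact treats it as open. You would need both $\mathcal W_{KR}(\mathfrak S^R_{N_R}, \mathfrak S^R) \to 0$ and $\mathcal W_{KR}(\mathfrak S^R, \mathfrak S) \to 0$ as $R \to 1^-$; the remark following this proposition lists precisely these two limits among the unresolved claims. Theorem~\ref{thm:main-analog} gives $\mathcal W_{KR}(\mathfrak S^R, \mathfrak S^R_{\beta N_R}) \leqslant N_R e^{-2\beta g(R)}$, but as $R \to 1^-$ we have $N_R \to \infty$ while $g(R) \to 1/2$, so for fixed $\beta$ this bound blows up rather than vanishing; a later proposition shows that one needs the stronger scaling $N_\varepsilon \sim \varepsilon^{-(1+\delta)}$ to force Wasserstein convergence. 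So the $\mathcal W_{KR}$ route is stronger than what is available here, and the direct kernel comparison you gave first is the one to keep.
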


\begin{proof}
Denote $k_{N_R}^R$ the restricted-truncated (to $N_R$ eigenvalues) kernel and $k$ the asymptotic one. Consider a compact $\Lambda \subset D(0,1)$. Recall that the Bergman kernel $k(x,y) = \frac{1}{\pi}\frac{1}{(1-x\bar{y})^2}$ is defined on the open disc $D(0,1)$ (this relation would not make sense on some points of the boundary).

Since $D(0,1)$ is open, denoting $m := \max_{z \in \Lambda} |z|$, we have $m < 1$. So, we have
\begin{align*}
\|k_{N_R}^R - k\|_{\infty, \Lambda} &= \left\| \frac{1}{\pi} \sum_{k=N_R}^\infty (k+1) x^k \bar{y}^k \right\|_{\infty, \Lambda} \\
&\leqslant \frac{1}{\pi} \sum_{k=N_R}^\infty (k+1) \|x^k \bar{y}^k\|_{\infty, \Lambda} \\
&\leqslant \frac{1}{\pi} \sum_{k=N_R}^\infty (k+1) m^{2k} \xrightarrow[R \to 1^-]{} 0
\end{align*}
since $ N_R \xrightarrow[R \to 1^-]{} +\infty $.
\end{proof}

\begin{theorem}\label{thm:cvg-in-law-dpp} (Proposition 3.10, \cite{ShiraiTakahashi2003})
Let $(K_n)_{n \ge 0}$ be integral operators with nonnegative continuous kernels $k_n(x,y)$. Assume that $K_n$ are bounded hermitian locally trace class integral operators. Assume that $(k_n)_{n \ge 0}$ converges to a kernel $k$ uniformly on each compact as $n \to \infty$. Then, the kernel $k$ defines an integral operator $K$ that is also, bounded, hermitian, and locally trace class. Furthermore, the determinantal measures, i.e. the DPP probability laws $\mu_n$ associated to the DPP induced by the integral operators $K_n$, weakly converge to the determinantal probabilty measure $\mu$ induced by $K$.
\end{theorem}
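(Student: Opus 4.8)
The statement is classical (it is Proposition~3.10 of \cite{ShiraiTakahashi2003}); the plan is to first transfer the operator-theoretic properties to the limit and then to prove weak convergence through convergence of Laplace functionals, using the Fredholm-determinant representation of generating functionals of determinantal processes.

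\textbf{Properties of the limit operator.} Since the $k_n$ are continuous and converge uniformly on every compact, the limit $k$ is continuous and inherits Hermitian symmetry, so $K$ is Hermitian. Fix a compact $\Lambda\subset E$; as $\lambda$ is Radon, $\lambda(\Lambda)<\infty$, hence $\|k_n-k\|_{L^2(\Lambda\times\Lambda,\lambda^{\otimes 2})}\le\lambda(\Lambda)\,\|k_n-k\|_{\infty,\Lambda}\to 0$, so $K_n^\Lambda\to K^\Lambda$ in Hilbert–Schmidt norm and in particular $K^\Lambda$ is bounded. For $f$ continuous with compact support, $\langle Kf,f\rangle=\lim_n\langle K_nf,f\rangle\in[0,\|f\|^2]$ (uniform convergence on the support of $f$), so by density $0\le K\le I$. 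For the local trace-class property and the value of the trace, use Mercer's theorem: $K_n^\Lambda\ge 0$ with continuous kernel gives $\mathrm{Tr}(K_n^\Lambda)=\int_\Lambda k_n(x,x)\,d\lambda(x)\to\int_\Lambda k(x,x)\,d\lambda(x)$ by uniform convergence on $\Lambda$, and Mercer applied to $K^\Lambda\ge 0$ identifies this limit with $\mathrm{Tr}(K^\Lambda)<\infty$. Thus $k$ satisfies Hypothesis~\ref{hyp:main-hypothesis} and, through the construction of Theorem~\ref{thm:main-dpp-thm}, defines a determinantal measure $\mu$.

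\textbf{Reduction to Fredholm determinants.} Weak convergence of locally finite point processes is equivalent to convergence of the Laplace functionals $\mathbf E[e^{-\langle g,\xi\rangle}]$ for all nonnegative $g\in C_c(E)$. Writing $f=1-e^{-g}$, which is continuous, supported in some compact $\Lambda$ and satisfies $0\le f<1$, the standard generating-functional identity for DPPs reads
\[
\mathbf E_{\mu_n}\!\left[e^{-\langle g,\xi\rangle}\right]=\mathbf E_{\mu_n}\!\Big[\prod_{x\in\xi}(1-f(x))\Big]=\mathrm{Det}_{L^2(\Lambda,\lambda)}\big(I-A_n\big),\qquad A_n:=M_{\sqrt f}\,K_n^\Lambda\,M_{\sqrt f},
\]
and likewise for $\mu$ with $A:=M_{\sqrt f}K^\Lambda M_{\sqrt f}$; all of $A_n,A$ are positive trace-class operators on $L^2(\Lambda,\lambda)$. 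It therefore suffices to prove $\mathrm{Det}(I-A_n)\to\mathrm{Det}(I-A)$.

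\textbf{Convergence of the determinants, and the main obstacle.} As in the first step one gets $A_n\to A$ in Hilbert–Schmidt (hence operator) norm from $\|k_n-k\|_{\infty,\Lambda}\to 0$, but the Fredholm determinant is continuous only for the \emph{trace} norm, and uniform convergence of kernels does not by itself yield trace-norm convergence — this is the crux of the argument. One clean way around it: since the $A_n$ are positive, $A_n\to A$ weakly and $\mathrm{Tr}(A_n)=\int_\Lambda f(x)k_n(x,x)\,d\lambda(x)\to\int_\Lambda f(x)k(x,x)\,d\lambda(x)=\mathrm{Tr}(A)$ (Mercer plus uniform convergence), so the Grümm-type convergence lemma for trace ideals — for positive operators, weak convergence together with convergence of traces implies convergence in trace norm — gives $A_n\to A$ in trace norm, whence $\mathrm{Det}(I-A_n)\to\mathrm{Det}(I-A)$. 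Alternatively, one argues directly on the Fredholm series
\[
\mathrm{Det}(I-A_n)=\sum_{m\ge 0}\frac{(-1)^m}{m!}\int_{\Lambda^m}\det\!\big(\sqrt{f(x_i)}\,k_n(x_i,x_j)\,\sqrt{f(x_j)}\big)_{1\le i,j\le m}\,d\lambda(x_1)\cdots d\lambda(x_m):
\]
Hadamard's inequality bounds the $m$-th integrand by $m^{m/2}C^m$ with $C:=\sup_n\|k_n\|_{\infty,\Lambda}<\infty$, so the $m$-th term is dominated by $\frac{m^{m/2}}{m!}(C\lambda(\Lambda))^m$, a summable bound independent of $n$; uniform convergence of $k_n$ to $k$ gives termwise convergence of the integrands, and dominated convergence (in $m$ and on $\Lambda^m$) yields the claim. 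Either way $\mathbf E_{\mu_n}[e^{-\langle g,\xi\rangle}]\to\mathbf E_{\mu}[e^{-\langle g,\xi\rangle}]$ for every admissible $g$, which is exactly $\mu_n\Rightarrow\mu$. Everything outside the trace-norm step is routine bookkeeping.
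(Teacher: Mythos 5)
The paper does not prove this statement; it is imported verbatim as Proposition~3.10 of \cite{ShiraiTakahashi2003} and used as a black box, so there is no in-text proof to compare against. Your reconstruction is correct and in fact follows the same route as the original: transfer the Hermitian/positivity/local-trace-class structure to the limit via uniform convergence and Mercer, reduce weak convergence of point processes to convergence of Laplace functionals, express the Laplace functional as a Fredholm determinant $\mathrm{Det}(I-M_{\sqrt f}K^\Lambda M_{\sqrt f})$, and upgrade Hilbert--Schmidt convergence to trace-norm convergence. You correctly identify that the only genuinely delicate point is that $\|k_n-k\|_{\infty,\Lambda}\to 0$ gives Hilbert--Schmidt but not automatically trace-norm convergence, and both remedies you offer are sound: the Gr\"umm-type lemma (for which you should note that you already have operator-norm, hence strong, convergence from the Hilbert--Schmidt estimate, which is what the lemma requires rather than mere weak-operator convergence), and the direct Hadamard-bound domination of the Fredholm series, which is uniform in $n$ because $\sup_n\|k_n\|_{\infty,\Lambda}<\infty$. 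One small point worth making explicit: the claim $0\le K\le I$ uses that each $K_n$ satisfies $0\le K_n\le I$, which is Hypothesis~\ref{hyp:main-hypothesis} rather than something in the theorem's hypotheses as literally stated; the phrase ``nonnegative continuous kernels'' in the statement is about positivity of the operator and should be read together with that standing hypothesis. With that caveat, the argument is complete.
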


\begin{remark} 
A look at the proof of Theorem~\ref{thm:main-analog} hints that
\begin{equation}
\mathcal W_{KR} ( \mathfrak S^R_{N}, \mathfrak S^R ) \xrightarrow[N \to \infty]{} 0    
\end{equation}
Still motivated by the question of the convergence for our variants of the Bergman DPP, one could ask whether or not some convergences between some of our variants hold in the Wasserstein sense ; in other words, whether (which of) the following claims hold :
\begin{equation}
\mathcal W_{KR} ( \mathfrak S^R, \mathfrak S) \xrightarrow[R \to 1^-]{} 0
\end{equation}
\begin{equation}\label{middle-conjecture}
\mathcal W_{KR} ( \mathfrak S^R_{N_R}, \mathfrak S^R ) \xrightarrow[R \to 1^-]{} 0
\end{equation}
\begin{equation}
\mathcal W_{KR} ( \mathfrak S^R_{N_R}, \mathfrak S ) \xrightarrow[R \to 1^-]{} 0
\end{equation}
And more generally, under what assumption on $ N_R $, besides going to infinity as $ R \to 1^- $, do these claims hold or not. 

We provide an element of answer to the very last question for \eqref{middle-conjecture} in the form of a sufficient condition in the following result. For the rest, these questions are open. Note that two of these claims would imply the third one via the triangular inequality for the distance $ \mathcal W_{KR} $.
\end{remark}

\begin{proposition} Let $ \varepsilon = 1 - R $. If 
\begin{equation}\label{strong-assumption-n}
N_\varepsilon \underset{\varepsilon \to 0^+}{\sim} \frac{1}{\varepsilon^{1+\delta}},
\end{equation}
then \eqref{middle-conjecture} holds.
\end{proposition}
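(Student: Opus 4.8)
The plan is to recycle the explicit coupling already used in the proof of Theorem~\ref{thm:main-analog}. That coupling gives, for any truncation level $N$,
\[
\mathcal W_{KR}(\mathfrak S^R, \mathfrak S^R_{N}) \leqslant \sum_{k=N+1}^\infty R^{2k+2} = \frac{R^{2N+4}}{1-R^2} = \frac{(1-\varepsilon)^{2N+4}}{\varepsilon(2-\varepsilon)},
\]
where $\varepsilon = 1-R$. Specializing to $N = N_R = N_\varepsilon$, it suffices to show that this last quantity tends to $0$ as $\varepsilon \to 0^+$ under hypothesis \eqref{strong-assumption-n} (with $\delta>0$, which is the relevant regime).

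First I would bound the exponential factor with the elementary inequality $\log(1-\varepsilon)\leqslant -\varepsilon$ on $(0,1)$:
\[
(1-\varepsilon)^{2N_\varepsilon} = \exp\!\bigl(2N_\varepsilon\log(1-\varepsilon)\bigr) \leqslant \exp(-2N_\varepsilon\varepsilon).
\]
Then I would insert the asymptotics $N_\varepsilon \sim \varepsilon^{-(1+\delta)}$, which yields $N_\varepsilon\varepsilon \sim \varepsilon^{-\delta} \to +\infty$; in particular $N_\varepsilon\varepsilon \geqslant \tfrac12\varepsilon^{-\delta}$ for $\varepsilon$ small enough, so that $(1-\varepsilon)^{2N_\varepsilon}\leqslant e^{-\varepsilon^{-\delta}}$ for such $\varepsilon$.

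Combining the two estimates, for $\varepsilon$ small enough,
\[
\mathcal W_{KR}(\mathfrak S^R, \mathfrak S^R_{N_R}) \leqslant \frac{(1-\varepsilon)^4}{\varepsilon(2-\varepsilon)}\,e^{-\varepsilon^{-\delta}} \leqslant \frac{1}{\varepsilon}\,e^{-\varepsilon^{-\delta}},
\]
and since $e^{-\varepsilon^{-\delta}}$ decays faster than any power of $\varepsilon$, the right-hand side tends to $0$ as $\varepsilon\to 0^+$. This is exactly \eqref{middle-conjecture}.

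There is no genuine obstacle here: the whole argument is a one-line asymptotic estimate once the coupling bound of Theorem~\ref{thm:main-analog} is in hand. The only delicate point is the competition between the polynomial blow-up $\varepsilon^{-1}$ of the prefactor and the super-polynomial decay $e^{-\varepsilon^{-\delta}}$ of the tail, and it is precisely $\delta>0$ that makes the decay win (for $\delta=0$ one has $N_\varepsilon\varepsilon\to 1$, the bound degenerates to a positive constant times $\varepsilon^{-1}$, and this route gives nothing — though this does not settle whether the Wasserstein distance itself still vanishes). One should also note, purely as a bookkeeping matter, that $N_R$ must be an integer; replacing it by $\lfloor N_\varepsilon\rfloor$ leaves all the asymptotics unchanged.
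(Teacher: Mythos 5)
Your proposal is correct and takes essentially the same route as the paper: both start from the coupling bound of Theorem~\ref{thm:main-analog}, use $\log(1-\varepsilon)\leqslant-\varepsilon$ to extract the $e^{-2N_\varepsilon\varepsilon}$ decay, and observe that the $\varepsilon^{-\delta}$ exponent beats the $\varepsilon^{-1}$ prefactor. The only cosmetic difference is that the paper first reduces \eqref{strong-assumption-n} to the intermediate condition \eqref{weak-assumption-n} $2N_\varepsilon\log(1-\varepsilon)-\log(\varepsilon)\to-\infty$ and then concludes from it, whereas you verify the limit directly; your remarks about the $\delta=0$ degenerate case and the integer-part bookkeeping are correct and go slightly beyond what the paper records.
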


More generally, \eqref{middle-conjecture} holds if 
\begin{equation}\label{weak-assumption-n}
2N_\varepsilon  \log(1-\varepsilon) - \log(\varepsilon) \xrightarrow[\varepsilon \to 0^+]{} - \infty .
\end{equation}

\begin{proof}

Assuming \eqref{strong-assumption-n}, we have
\begin{align*}
    2N_\varepsilon \log(1-\varepsilon) &\leqslant - 2N_\varepsilon \varepsilon \\
                                       &\underset{\varepsilon \to 0^+}{\sim} - 2 \varepsilon^{-\delta}
\end{align*}
So, knowing how logs compare to power functions at zero, \eqref{weak-assumption-n} holds and it suffices to show \eqref{middle-conjecture} assuming \eqref{weak-assumption-n}.

We have
\begin{align*}
    \mathcal{W}_{KR}(\mathfrak{S}^{1-\varepsilon}, \mathfrak{S}_{N_\varepsilon }^{1-\varepsilon}) 
    &\leqslant \frac{(1-\varepsilon)^2}{\varepsilon(2-\varepsilon)} (1-\varepsilon)^{2 N_R} \\
    &\leqslant \frac{(1-\varepsilon)^2}{(2-\varepsilon)} e^{2 N_R \log(1-\varepsilon) - \log(\varepsilon)}
\end{align*}
We have the result.
\end{proof}

\begin{remark} In particular, considering $N_R$ from Theorem~\ref{thm:main-analog}, convergence holds in the Wasserstein sense holds replacing $ N_R$ by $ N_R^{1+\delta} $.
\end{remark}

\section{The Bergman DPP on other regions}

We now consider a variant of the previous restriction. A sight at Figure~\ref{fig:main-fig} in the introduction shows that the points in the Bergman DPP are very highly repelled from the center, and very intensely concentrated at the border of the disc they're living in. The following was shown in \cite{ZerosGaussianPowerSeries} :

\begin{theorem}\label{thm:moduli}
The law of the set of the moduli $\{|X_k|, k \ge 1\}$ of the points that come out from the Bergman determinantal point process is exactly the law of the set
\[
\{U_k^{1/(2k)}, k \ge 1\}
\]
where $(U_k)_{k \ge 1}$ is a sequence of independent, uniform in $[0,1]$ random variables.
\end{theorem}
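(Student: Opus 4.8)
The plan is to combine the fundamental theorem of DPPs (Theorem~\ref{thm:main-dpp-thm}) with the explicit Mercer decomposition of the (unrestricted) Bergman kernel, and then exploit the rotational invariance to reduce the problem to the radial coordinates alone. First I would write the Bergman kernel in its Mercer form $k(x,y) = \sum_{k\ge 0} \phi_k(x)\overline{\phi_k(y)}$ with $\phi_k(x) = \sqrt{\tfrac{k+1}{\pi}}\, x^k$, so that \emph{all} eigenvalues equal $1$. By Theorem~\ref{thm:main-dpp-thm} the Bergman DPP therefore has \emph{deterministic} kernel (every Bernoulli is a.s.\ equal to $1$), and the spatial coordinates can be generated by the sequential algorithm recalled in the paper (the infinite analogue of Algorithm~1): draw $X_1$ with density proportional to $\sum_k |\phi_k(x)|^2$, then successively draw $X_n$ with density proportional to the squared norm of the projection of $\varphi(x)=(\phi_k(x))_k$ onto the orthogonal complement of $\mathrm{span}(e_1,\dots,e_{n-1})$. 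The key structural observation is that because the $\phi_k$ are monomials, the partial sums and Gram--Schmidt steps can be carried out explicitly, and at each stage the conditional law of $|X_n|^2$ turns out to be uniform on $[0,1]$ after the appropriate change of variable $u = |X_n|^{2n}$.

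More concretely, the cleanest route is probably the following. Condition on the moduli: by rotational invariance of the kernel, the angular parts of the points are i.i.d.\ uniform on $[0,2\pi)$ and independent of the moduli, so it suffices to identify the law of the \emph{multiset} of radii. One then shows by induction that, at step $n$ of the sampling procedure, the orthonormal vectors $e_1,\dots,e_{n-1}$ produced by Gram--Schmidt span exactly the first $n-1$ ``coordinate directions'' suitably rotated, so that the residual density for $X_n$ is (up to normalisation and angular factors) $\sum_{k \ge n-1}(k+1)|x|^{2k}$; integrating out the angle and substituting $t = |x|^2$ shows the density of $|X_n|^2$ is proportional to $t^{n-1}\cdot(\text{a power series in }t\text{ that integrates to the normalising constant})$, and a further change of variables collapses this to a power law, i.e.\ $|X_n|^{2n}$ is uniform on $[0,1]$. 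Since the points are labelled by the order in which the algorithm produces them but the \emph{set} of radii does not depend on that labelling, one recovers $\{|X_k| : k\ge 1\} \overset{d}{=} \{U_k^{1/(2k)} : k\ge 1\}$ with $U_k$ i.i.d.\ uniform. Independence of the $U_k$ comes from the fact that, conditionally on $X_1,\dots,X_{n-1}$, the law of $X_n$ depends on the past only through the exhausted span, whose dimension is deterministic.

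An alternative, and perhaps shorter, argument avoids the sampling algorithm entirely: one computes the joint density of the moduli directly. Using the correlation functions $\rho_n(x_1,\dots,x_n) = \det(k(x_i,x_j))$ and the hole-probability / Janossy-density description of a DPP with a deterministic (projection-type) kernel, the joint density of the $n$ points is a Vandermonde-type expression; integrating out the angles via the orthogonality $\int_0^{2\pi} e^{i\theta(j-k)}\,d\theta = 2\pi\delta_{jk}$ turns the squared modulus of the Vandermonde determinant $\det(x_i^{j})$ into $\sum_\sigma \prod_i |x_i|^{2\sigma(i)}$, which factorises so that the radii become independent with $|X_k|^2$ having density proportional to $t^{k-1}$ on $[0,1]$ — precisely the law of $U_k^{1/k}$, whence $|X_k| \overset{d}{=} U_k^{1/(2k)}$. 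I would likely present this computation in the finite-rank truncated case first, where everything is a genuine $n\times n$ determinant, and then pass to the limit (the truncation results earlier in the paper make this limiting step clean).

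The main obstacle I anticipate is the bookkeeping in the induction step — showing rigorously that the Gram--Schmidt process applied to the monomial functions keeps the residual space equal to the span of $\{x^k : k \ge n-1\}$ in $L^2$ of the (random) configuration, and in particular controlling the fact that the relevant inner products are over $L^2$ of Lebesgue measure on the disc rather than a counting measure. If I take the second (Janossy-density) route instead, the only subtlety is justifying the interchange of the infinite product/sum with the angular integration and the passage to the limit from the rank-$N$ truncation, both of which are controlled by the uniform-on-compacts convergence of the kernels established earlier in the excerpt.
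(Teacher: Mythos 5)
The paper does not actually prove this theorem: it is stated as a result from \cite{ZerosGaussianPowerSeries} (Peres--Vir\'ag), and the text immediately preceding it reads ``The following was shown in \cite{ZerosGaussianPowerSeries}.'' So there is no in-paper proof to match your proposal against, and the relevant comparison is whether your two sketches would actually close.

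Your first route (explicit Gram--Schmidt in the sequential sampling algorithm) rests on the claim that, at step $n$, the conditional law of $|X_n|$ is such that $|X_n|^{2n}$ is uniform. This is not true, and it already fails at $n=1$: for a rank-$n$ truncation the first point is drawn from density $\frac{1}{n}K(x,x)=\frac{1}{n\pi}\sum_{k=0}^{n-1}(k+1)|x|^{2k}$, so $|X_1|^2$ has density proportional to $\sum_{k=0}^{n-1}(k+1)t^{k}$ on $[0,1]$, which is not a power law for $n\ge 2$. More fundamentally, the labelling of points by the algorithm's sampling order has no reason to align with the labelling by $k$ in $\{U_k^{1/(2k)}\}$; the theorem is a statement about the unordered set of moduli, and the sampling order is not a canonical one. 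Your second route (Janossy / Vandermonde) is closer, but contains a real error: after integrating out the angles via $\int_0^{2\pi}e^{i\theta(j-k)}\,d\theta = 2\pi\delta_{jk}$, the squared Vandermonde becomes $\sum_{\sigma}\prod_i r_i^{2\sigma(i)}$, which is a \emph{permanent}, not a product. The radii do \emph{not} become independent with $|X_k|^2$ of density proportional to $t^{k-1}$; rather, the density of the unordered multiset of radii equals the symmetrization of the independent product $\prod_k 2k\,r_k^{2k-1}$, which is exactly what the permanent gives after accounting for the Jacobian $r\,dr\,d\theta$. So the conclusion about the \emph{set} is recoverable, but the intermediate ``independence of labelled radii'' step as you wrote it is false and needs to be replaced by this identification of symmetrized densities, followed by the passage from finite rank to the full kernel. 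Incidentally, a much shorter and rigorous route uses only void probabilities and is essentially already contained in Remark~\ref{rem:general-eigenvalues} of the paper: for any compact $A\subset[0,1)$ the void probability of the moduli on $A$ is $\prod_{k\ge 0}(1-\lambda_k^A)$ with $\lambda_k^A=\int_A(2k+2)r^{2k+1}\,dr=\mathbf P\bigl(U_{k+1}^{1/(2k+2)}\in A\bigr)$, which matches the void probability of the candidate random set; since both are simple point processes on $[0,1]$, R\'enyi's theorem concludes.
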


\begin{remark} (Conjecture)

According to observations (see Figure~\ref{fig:main-fig}), we may conjecture that, conditionnally on the set $I = \{n \ge 0, B_n = 1\}$ of active Bernoulli random variables, the law of the set of moduli $\{|X_k|, k \in I\}$ of the points for the \textit{restricted} Bergman determinantal point process is exactly the law of the set
\[
\{U_k^{1/(2k)}, k \in I\}
\]
where $(U_k)_{k \ge 1}$ is a sequence of independent, uniform in $[0, R]$ random variables.
\end{remark}

We have the following.

\begin{proposition} Let $m := \min_{1 \le k \le n} (|X_k|)$ denote the smallest radius among the $n$ first points of the Bergman point process (as always, "first" in the sense of Mercer's decomposition).

We have, for all $x \in [0,1]$
\[
\mathbf{P}(m \leqslant x) = 1 - \prod_{k=1}^n (1 - x^{2k}).
\]
\end{proposition}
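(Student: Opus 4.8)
The plan is to reduce this to Theorem~\ref{thm:moduli}. By that theorem, the set of moduli $\{|X_k|, k\ge 1\}$ of the points of the (unrestricted) Bergman DPP has the same law as $\{U_k^{1/(2k)}, k\ge 1\}$ for an i.i.d.\ sequence $(U_k)$ uniform on $[0,1]$. The subtlety is that Theorem~\ref{thm:moduli} is a statement about the law of the \emph{set}, whereas the quantity $m=\min_{1\le k\le n}|X_k|$ refers to the ``$n$ first points'' in the order given by Mercer's decomposition. So first I would note that the enumeration of the points of the Bergman DPP coming from Mercer's decomposition is precisely the one for which the $k$-th modulus is $U_k^{1/(2k)}$ — that is, Theorem~\ref{thm:moduli} holds index by index, not just at the level of unordered sets (this is how it is proved in \cite{ZerosGaussianPowerSeries}, via the Bernoulli-indexed structure of Theorem~\ref{thm:main-dpp-thm}). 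Under this identification, $m$ has the same law as $\min_{1\le k\le n} U_k^{1/(2k)}$.

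Next I would compute the distribution of $\min_{1\le k\le n} U_k^{1/(2k)}$ directly. By independence of the $U_k$,
\[
\mathbf{P}\!\left(\min_{1\le k\le n} U_k^{1/(2k)} > x\right) = \prod_{k=1}^n \mathbf{P}\!\left(U_k^{1/(2k)} > x\right) = \prod_{k=1}^n \mathbf{P}\!\left(U_k > x^{2k}\right) = \prod_{k=1}^n \left(1 - x^{2k}\right),
\]
valid for $x\in[0,1]$, since $x^{2k}\in[0,1]$ and $U_k$ is uniform on $[0,1]$. Taking complements gives $\mathbf{P}(m\le x) = 1 - \prod_{k=1}^n(1-x^{2k})$, which is the claimed formula.

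The one point requiring care — and the main (mild) obstacle — is the passage from the ``law of the set'' statement of Theorem~\ref{thm:moduli} to a statement about the \emph{ordered} family $(|X_k|)_{1\le k\le n}$, because $m$ is not a function of the unordered set $\{|X_k|,k\ge1\}$ alone but of its first $n$ members. I would address this by recalling that the construction underlying Theorem~\ref{thm:moduli} produces the points with an intrinsic labelling inherited from the Bernoulli variables $B_k$ and the eigenfunctions $\phi_k$ of Mercer's decomposition, and that under this labelling $|X_k|\overset{d}{=}U_k^{1/(2k)}$ jointly over $k$ with the $U_k$ independent; no reordering is involved. Once that is in place the computation above is routine. (Alternatively, if one only wants the statement for the full process, one can observe that $m=\min_{k\ge 1}|X_k|$ is recovered as $n\to\infty$ and the infinite product $\prod_{k\ge1}(1-x^{2k})$ converges for $x<1$, but the finite-$n$ version is exactly what the identification above yields.)
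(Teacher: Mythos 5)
Your proof is correct and follows the same route as the paper's: invoke Theorem~\ref{thm:moduli} and then compute $\mathbf{P}(m>x)=\prod_{k=1}^n\mathbf{P}(U_k^{1/(2k)}>x)=\prod_{k=1}^n(1-x^{2k})$ by independence, taking the complement. The extra paragraph you devote to the passage from the ``law of the set'' statement to the indexed computation flags a genuine informality in the proposition's phrasing, but the paper itself does not address it and proceeds exactly as you do, so this is added caution rather than a different argument.
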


\begin{proof}

We have
\[
\mathbf{P}(U_k^{1/(2k)} \ge x) = \mathbf{P}(U_k \ge x^{2k}) = 1 - x^{2k}.
\]
According to Theorem~\ref{thm:moduli}, we have
\[
\mathbf{P}(m \ge x) = \prod_{k=1}^n \mathbf{P}(U_k^{1/(2k)} \ge x) = \prod_{k=1}^n (1 - x^{2k}).
\]
The proof is complete.
\end{proof}

\begin{remark} As a corollary, we have
\[
\mathbf{P}(m \leqslant x) \underset{x \to 0}{\sim} x^2
\]
because the polynomial involved is even and vanishes at zero. This suggests that restricting to a annulus instead of a ball could be enough. Though it is to be mentionned that this has not yet been implemented in \cite{MorozSoftware}, we construct this restriction.
\end{remark}

\begin{theorem} Denote $T(r,R)$ the compact annulus centered at 0 with inner radius $r$ and outer radius $R$. Mercer's decomposition of the kernel $k_{r,R}(x,y)$ of the Bergman determinantal point process restricted to $T(r,R)$ is
\[
k_{r,R}(x,y) = \sum_{n \ge 0} \lambda_n^{r,R} \phi_n^{r,R}(x) \overline{\phi_n^{r,R}(y)}
\]
where the eigenvalues are
\[
\lambda_k^R = R^{2k+2} - r^{2k+2}
\]
and the eigenfunctions
\[
\phi_k^R: x \mapsto \sqrt{\frac{k+1}{\pi(R^{2k+2} - r^{2k+1})}} x^k
\]
\end{theorem}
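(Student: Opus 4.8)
The plan is to follow, almost verbatim, the argument used for the disc, replacing $R^{2k+2}$ systematically by $R^{2k+2}-r^{2k+2}$. First I would expand the Bergman kernel: using $\frac{1}{(1-u)^2}=\sum_{k\ge 0}(k+1)u^k$, which converges for $|u|<1$ and in particular for $u=x\bar y$ with $(x,y)\in T(r,R)^2$ since $|x\bar y|\le R^2<1$, one gets $k(x,y)=\sum_{k\ge 0}\frac{k+1}{\pi}x^k\bar y^k$. Then I regroup each term, with the evident correction of the typo $r^{2k+1}\to r^{2k+2}$ in the statement, as
\[
\frac{k+1}{\pi}x^k\bar y^k=\bigl(R^{2k+2}-r^{2k+2}\bigr)\,\frac{k+1}{\pi\bigl(R^{2k+2}-r^{2k+2}\bigr)}\,x^k\bar y^k,
\]
so that, setting $c_k=\tfrac{k+1}{\pi(R^{2k+2}-r^{2k+2})}$, $\phi_k^{r,R}(x)=\sqrt{c_k}\,x^k$ and $\lambda_k^{r,R}=R^{2k+2}-r^{2k+2}$, the right-hand side is exactly $\lambda_k^{r,R}\,\phi_k^{r,R}(x)\overline{\phi_k^{r,R}(y)}$, and $k^{r,R}(x,y)=\sum_{k\ge0}\lambda_k^{r,R}\phi_k^{r,R}(x)\overline{\phi_k^{r,R}(y)}$ with uniform convergence on $T(r,R)^2$.

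The only genuine computation is the normalization, which pins down $c_k$ and establishes orthonormality. In polar coordinates,
\[
\int_{T(r,R)}z^n\bar z^m\,\mathrm d z=\int_r^R\!\!\int_0^{2\pi}\rho^{n+m}e^{i\theta(n-m)}\rho\,\mathrm d\theta\,\mathrm d\rho=2\pi\,\delta_{n,m}\int_r^R\rho^{2n+1}\,\mathrm d\rho=2\pi\,\delta_{n,m}\,\frac{R^{2n+2}-r^{2n+2}}{2n+2},
\]
whence $\langle\phi_n^{r,R},\phi_m^{r,R}\rangle_{L^2(T(r,R),\lambda)}=\delta_{n,m}$; each $\phi_k^{r,R}$ lies in $L^2(T(r,R),\lambda)$ because $T(r,R)$ is compact. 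It then remains to check that the $\phi_k^{r,R}$ are eigenfunctions of $K^{r,R}$ with the announced eigenvalues and that they exhaust the nonzero spectrum: inserting the (uniformly convergent) expansion of $k$ into $K^{r,R}\phi_n^{r,R}$ and interchanging sum and integral gives $K^{r,R}\phi_n^{r,R}=\sum_m\lambda_m^{r,R}\langle\phi_n^{r,R},\phi_m^{r,R}\rangle\phi_m^{r,R}=\lambda_n^{r,R}\phi_n^{r,R}$, and for any $\psi\perp\{\phi_k^{r,R}:k\ge0\}$ the same computation yields $K^{r,R}\psi=0$, so no further nonzero eigenvalues occur. This is precisely the asserted Mercer decomposition of $k_{r,R}$.

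I do not expect a real obstacle here; the argument is entirely routine. The two points that deserve a line of care are the termwise integration, justified by normal convergence of $\sum_k(k+1)R^{2k}$ on the compact annulus, and — exactly as on the disc — the observation that one only needs orthonormality of the family $\{\phi_k^{r,R}\}$ together with the rank-one structure of the kernel, and \emph{not} completeness of $\{z^k:k\ge0\}$ in $L^2(T(r,R),\lambda)$: negative powers and antiholomorphic functions simply sit in the zero-eigenspace of $K^{r,R}$. Finally, letting $r\to 0^+$ recovers the earlier disc theorem, which serves as a consistency check.
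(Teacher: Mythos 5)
Your proof is correct and follows essentially the same route as the paper's: expand $\tfrac{1}{(1-x\bar y)^2}$ as a power series, regroup each term to read off $\lambda_k^{r,R}$ and $\phi_k^{r,R}$, and verify orthonormality by the polar-coordinate integral over the annulus. You go a bit further than the paper in explicitly checking that the $\phi_k^{r,R}$ are eigenfunctions, that the orthogonal complement lies in the kernel, and in flagging the typo $r^{2k+1}\to r^{2k+2}$, but these are small refinements of the identical argument rather than a different method.
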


\begin{proof}
We have, for the (original) Bergman kernel
\[
k(x,y) = \frac{1}{\pi} \frac{1}{(1-x\bar{y})^2},
\]
using $\frac{1}{(1-u)^2} = \sum_{k \ge 0} (k+1)u^k$ for all $u \in T(r,R)$, we have
\[
\sum_{k \ge 0} \frac{k+1}{\pi} x^k \bar{y}^k = \sum_{n \ge 0} \lambda_n^{r,R} \phi_n^{r,R}(x) \overline{\phi_n^{r,R}(y)}
\]
taking $\phi_n^{r,R}$ and $\lambda_n^{r,R}$ as in the theorem statement. We have
\[
\langle \phi_n^{r,R}, \phi_m^{r,R} \rangle_{L^2(T(r,R))} = \frac{1}{\pi} \sqrt{\frac{(n+1)(m+1)}{\lambda_n^{r,R} \lambda_m^{r,R}}} \int_{T(r,R)} z^n \bar{z}^m \mathrm dz
\]
However,
\[
\int_{T(r,R)} z^n \bar{z}^m \mathrm dz = \int_r^R \int_0^{2\pi} r^{n+m} e^{i\theta(n-m)} \mathrm d\theta rdr = 2\pi \delta_{n,m} \int_r^R r^{n+m+1} \mathrm dr = 2\pi \delta_{n,m} \left[ \frac{R^{2n+2}}{2n+2} - \frac{r^{2n+2}}{2n+2} \right]
\]
Consequently,
\[
\langle \phi_n^{r,R}, \phi_m^{r,R} \rangle_{L^2(T(r,R))} = \delta_{n,m}.
\]
The proof is thus complete.
\end{proof}

\begin{definition} If $A$ is a Borel subset of $[0,1]$, in the sequel, we will denote 
\[
Z(A) = \{z \in \mathbb{C}, |z| \in A\}.
\]
\end{definition}

\begin{remark}\label{rem:general-eigenvalues}
The previous proof actually shows that it is also feasible to compute the restriction to a domain of the form $ Z(A) $, yielding eigenvalues
\[
\lambda_n^A = (2n+2)\int_A r^{2n+1} \mathrm dr,
\] 
with the same corresponding eigenfunctions up to a normalization factor. The proof is essentially identical replacing $ \int_r^R $ by $ \int_A $ and renormalizing the eigenfunctions.
\end{remark}

Not only is it interesting itself, the following proposition will be necessary for the proof of next theorem.

\begin{proposition} Let $ A $ and $B$ be two closed subsets of $ [0,1] $ such that $ A \subset B $. Consider the Bergman DPP $ \mathfrak S^A $ (resp. $ \mathfrak S^B $) restricted to $Z(A)$ (resp. to $Z(B)$). Denote $ \lambda^A_n $ (resp. $ \lambda^B_n $) its $n$-th eigenvalue (the (only) one associated to the eigenfunction which is a monomial of degree $n$)

Then we have 
\[
    \forall n \in \mathbf N, \qquad \lambda_n^A \leqslant \lambda_n^B
\]
\end{proposition}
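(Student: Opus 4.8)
The plan is to reduce the statement entirely to the explicit integral formula for the eigenvalues recorded in Remark~\ref{rem:general-eigenvalues}. That remark gives, for any Borel set $A \subseteq [0,1]$,
\[
\lambda_n^A = (2n+2)\int_A r^{2n+1}\,\mathrm dr,
\]
the integral being taken against Lebesgue measure on $[0,1]$; this is just the radial part $\int_r^R s^{n+m+1}\,\mathrm ds$ of the annulus computation with $\int_r^R$ replaced by $\int_A$. Since $A$ and $B$ are closed they are in particular Borel, so both $\lambda_n^A$ and $\lambda_n^B$ are well defined, and for each region the eigenvalue $\lambda_n^{\,\cdot}$ is unambiguously the one attached to the (normalized) degree-$n$ monomial, so comparing $\lambda_n^A$ with $\lambda_n^B$ is comparing eigenvalues of the "same" eigenfunction.

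The one substantive observation is that the integrand $r \mapsto r^{2n+1}$ is nonnegative on $[0,1]$. Then, using $A \subseteq B$ to write $B = A \sqcup (B\setminus A)$ as a disjoint union of Borel sets,
\[
\lambda_n^B = (2n+2)\int_A r^{2n+1}\,\mathrm dr + (2n+2)\int_{B\setminus A} r^{2n+1}\,\mathrm dr \;\geqslant\; (2n+2)\int_A r^{2n+1}\,\mathrm dr = \lambda_n^A,
\]
since the second summand is the integral of a nonnegative function over $B\setminus A$, hence $\geqslant 0$. This yields the inequality for every $n \in \mathbf N$.

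There is essentially no obstacle here. The only step that deserves a word is the justification of the eigenvalue formula of Remark~\ref{rem:general-eigenvalues} itself — namely that rerunning the orthonormalization argument of the annulus theorem with $\int_r^R$ replaced by $\int_A$ still produces an orthonormal basis of monomial eigenfunctions with the stated eigenvalues — but this is exactly what that remark asserts and may be taken as given. Once that formula is available, monotonicity of the eigenvalues in the domain is nothing more than monotonicity of the Lebesgue integral under inclusion of sets, applied to the fixed nonnegative integrand $r^{2n+1}$.
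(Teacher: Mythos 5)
Your proof is correct and follows exactly the route the paper intends: the paper's proof simply states that the result is "an immediate corollary of Remark~\ref{rem:general-eigenvalues}," which is precisely the observation you spell out, namely that $\lambda_n^A = (2n+2)\int_A r^{2n+1}\,\mathrm dr$ is monotone in $A$ because the integrand is nonnegative. You have merely made the one-line implication explicit, which is fine.
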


\begin{proof}

This is an immediate corollary of Remark~\ref{rem:general-eigenvalues}.

\end{proof}

\begin{remark}\label{rem:adherence-property}
    Here we have thus constructed restrictions of the Bergman DPP to domains of the form $ Z([0,R]) $ and $ Z([r,R]) $. These restrictions are of interest for the simulation because as opposed to the original Bergman process, they exhibit a finite number of points almost surely. However, the main « loss » induced by such restrictions, clearly, is that we lack the possibility to simulate points in the annulus $ Z([1-\varepsilon,1]) $. But, given the observations of Figure~\ref{fig:main-fig}, it seems that it is really a shame since the points seem to be extremely concentrated on the border. A natural idea would be to restrict the Bergman DPP to a region of the form $ Z(A) $ such that $ [1-\varepsilon, 1] \subset A$ instead of $ \mathcal B(0,R) = Z([0,R]) $ for instance. Recalling that simulating an infinite number of points is unfeasible, this approach is quite doomed to failure as we show in the next result.
\end{remark}

\begin{theorem} The restriction of the Bergman DPP to any region of the form $ Z(A) $ ($ A \subset [0,1] $) that contains $ T(1-\varepsilon,1) = Z([1-\varepsilon,1]) $ exhibits an infinite number of points almost surely.
\end{theorem}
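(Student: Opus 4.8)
The plan is to use the characterization from the corollary to Theorem~\ref{thm:main-dpp-thm}: the restricted DPP has finitely many points almost surely if and only if its integral operator is of trace class, i.e. if and only if $\sum_{n \ge 0} \lambda_n^A < \infty$, where by Remark~\ref{rem:general-eigenvalues} the eigenvalues are $\lambda_n^A = (2n+2)\int_A r^{2n+1}\,\mathrm dr$. So it suffices to show that if $[1-\varepsilon, 1] \subset A$, then $\sum_{n \ge 0} \lambda_n^A = +\infty$.

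First I would invoke the monotonicity Proposition: since $[1-\varepsilon,1] \subset A$ (both closed), we have $\lambda_n^{[1-\varepsilon,1]} \le \lambda_n^A$ for every $n$, so it is enough to prove divergence of $\sum_n \lambda_n^{[1-\varepsilon,1]}$, i.e. of the series of eigenvalues of the annulus $T(1-\varepsilon, 1)$. By the annulus theorem these eigenvalues are $\lambda_n^{1-\varepsilon,1} = 1 - (1-\varepsilon)^{2n+2}$. Then the conclusion is immediate: $1 - (1-\varepsilon)^{2n+2} \to 1$ as $n \to \infty$ (since $0 < 1-\varepsilon < 1$), so the general term of the series does not tend to zero, and hence $\sum_{n \ge 0}\bigl(1 - (1-\varepsilon)^{2n+2}\bigr) = +\infty$.

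Putting it together: $\sum_{n\ge 0}\lambda_n^A \ge \sum_{n \ge 0}\lambda_n^{1-\varepsilon,1} = \sum_{n\ge 0}\bigl(1-(1-\varepsilon)^{2n+2}\bigr) = +\infty$, so $K^{Z(A)}$ is not trace class, and by the corollary to the fundamental theorem the restricted DPP has infinitely many points almost surely. (Alternatively, and equivalently, one can argue directly with the Bernoulli variables $B_n$ of Theorem~\ref{thm:main-dpp-thm}: since $\mathbf P(B_n = 1) = \lambda_n^A \ge 1 - (1-\varepsilon)^{2n+2} \not\to 0$, the second Borel–Cantelli lemma — the $B_n$ being independent — forces $B_n = 1$ for infinitely many $n$ almost surely, whence infinitely many points.)

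I do not expect any real obstacle here; the statement is essentially a one-line consequence of the explicit eigenvalue formula plus monotonicity. The only point requiring the slightest care is making sure the reduction to the annulus is licit — that is, that $[1-\varepsilon,1]$ is closed and contained in $A$ so that the monotonicity Proposition applies verbatim — and noting that $\int_{[1-\varepsilon,1]} r^{2n+1}\,\mathrm dr$ coincides with the annulus integral $\int_{1-\varepsilon}^{1} r^{2n+1}\,\mathrm dr$ so that $\lambda_n^{[1-\varepsilon,1]} = 1 - (1-\varepsilon)^{2n+2}$ as claimed. Everything else is routine.
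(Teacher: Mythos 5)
Your proof is correct and follows essentially the same route as the paper: bound $\lambda_n^A$ from below by the annulus eigenvalue $1-(1-\varepsilon)^{2n+2}$ and conclude divergence of $\sum_n \lambda_n^A$, hence failure of the trace-class property. The only cosmetic difference is that the paper argues divergence via convergence of $\sum_n (1-\varepsilon)^{2n+2}$ rather than via the general term not vanishing, and it writes the integral comparison directly rather than citing the monotonicity proposition by name; both are the same estimate.
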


\begin{proof}
As explained in Remark~\ref{rem:general-eigenvalues}, such restriction yields the eigenvalues
\[
\lambda_n^A = (2n+2)\int_A r^{2n+1} \mathrm d r .
\]
However, we then have 
\[
\lambda_n^A \geqslant (2n+2) \int_{[1-\varepsilon, 1]} r^{2n+1} \mathrm d r = 1 - (1-\varepsilon)^{2n+2}.
\]
The series $ \displaystyle \sum_{n\ge 0} (1-\varepsilon)^{2n+2} $ is convergent, hence the sum $ \displaystyle \sum_{n \ge 0} \lambda_n^A $ is not. As a result, the corresponding integral operator is not trace class, and so the corresponding DPP exhibits an infinite number of points almost surely.
\end{proof}

\begin{remark}
    So far we have considered the families of regions $ \mathcal F_1 = (Z([1-\varepsilon, 1]))_{\varepsilon \in (0,1]} $, $ \mathcal F_2 = (Z([0,R]))_{R \in [0,1)} $ and $ \mathcal F_3 = (Z([r,R]))_{\substack{r,R \in [0,1) \\ r < R }} $. Although we have just shown that the regions of $ \mathcal F_1 $ are not a good choice for feasible simulation, this family satisfies the following property :
\end{remark}

\begin{center}\label{property-a}
    \hfill For all $ X \in \mathcal F$, for all $ \delta > 0 $, the (topological) interior of $Z([1-\delta, 1]) \cap X$ is nonempty. \hfill (A)
\end{center} 

In other words, (the interior of) the regions $X \in \mathcal F_1$ is not included in some $ Z([0,R]) $ (with $ R < 1 $) : its interior « touches » the unit circle (topologically). We have explained in Remark~\ref{rem:adherence-property} why this property matters. Note that as opposed to $\mathcal F_1$, the families $\mathcal F_2$ and $\mathcal F_3$ do not satisfy this property.

It is thus natural to ask the following question : does there exist a family of regions $ \mathcal F $, for which \hyperref[property-a]{(A)} holds and retricted to which the Bergman DPP exhibits a finite number of points almost surely ?

Recall that the Lebesgue measure $ \lambda(\mathcal B(0,1)) $ of the whole unit disc $ \mathcal B(0,1)$ is $ \pi $. Another interesting property verified by the three families of regions $ \mathcal F_1 $, $ \mathcal F_2 $ and $ \mathcal F_3 $ is the following :

\begin{center}\label{property-b}
    \hfill For all $ \delta > 0 $, there exists $ Z \in \mathcal F $ such that $ \lambda (Z) \geqslant \pi - \delta $. \hfill (B)
\end{center}

Intuitively, the most the region to which the Bergman DPP is restricted covers $ \mathcal B(0,1) $, the closest the approcimation should be. This is why property~\hyperref{property-b} matters too (although we do not here provide the probabilistic results that express this idea). 

So, the question is now : can we find a family of regions $ \mathcal F $, such that the Bergman DPP restricted to any $ Z \in \mathcal F $ exhibits a finite number of points almost surely, \textit{and} that satisfies both properties \hyperref[property-a]{(A)}, \hyperref[property-b]{(B)} ?

We now answer this question in the positive and explicitely contruct such family.

\begin{theorem}
Let $ (u_n)_{n\geqslant} $ be any sequence of positive numbers such that 
\[ 
    \sum_{n \ge 0} u_n < + \infty. 
\] 
Let $ (a_n)_{n \in \mathbf N} $ and $ (b_n)_{n \in \mathbf N} $ be two sequences valued in $ (0,1) $ with $ 0 < a_0 < b_0 < 1 $ and that satisfy the following recursive relation :
\[
\forall n \in \mathbf N, 
\left\{
\begin{aligned}
    a_{n+1} &\in (b_n, 1) \\
    b_{n+1} &= \sqrt{ \frac{a_{n+1}^2 + u_n(1-a_{n+1}^2)}{a_{n+1}^2 + (1+u_n)(1-a_{n+1}^2)} }
\end{aligned}
\right.
\]
Then the Bergman determinantal point process restricted to 
\[
    Z\left( \bigcup_{n \ge 0} [a_n, b_n]\right) 
\]
exhibits a finite number of points almost surely. Furthermore, choosing $ a_n $'s in the recurrence relation such that $ a_n \xrightarrow[n \to \infty ]{} 1 $ ensures that property (A) is satisfied, and further choosing $ b_0 \geqslant 1 - \delta < 1$ for $ \delta $'s approaching zero yields regions that ensure that property (B) is satisfied. 
\end{theorem}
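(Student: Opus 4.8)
The plan is to use the eigenvalue formula from Remark~\ref{rem:general-eigenvalues}: for a region $Z(A)$ with $A = \bigcup_{n\ge0}[a_n,b_n]$, the $n$-th eigenvalue is
\[
\lambda_n^A = (2n+2)\int_A r^{2n+1}\,\mathrm dr = \sum_{k\ge0}\bigl(b_k^{2n+2} - a_k^{2n+2}\bigr).
\]
By the corollary to Theorem~\ref{thm:main-dpp-thm}, the restricted DPP has almost surely finitely many points if and only if the integral operator is trace class, i.e.\ if and only if $\sum_{n\ge0}\lambda_n^A < \infty$. So the heart of the matter is to bound $\sum_n \lambda_n^A$, and the design of the recurrence is precisely what will make the double sum $\sum_n\sum_k (b_k^{2n+2}-a_k^{2n+2})$ summable. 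First I would justify interchanging the two sums (everything is nonnegative, so Tonelli applies) to get $\sum_{n\ge0}\lambda_n^A = \sum_{k\ge0} S_k$ where $S_k := \sum_{n\ge0}(b_k^{2n+2}-a_k^{2n+2}) = \frac{b_k^2}{1-b_k^2} - \frac{a_k^2}{1-a_k^2}$.

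The key computation is to check that the recurrence forces $S_k \le u_k$ for each $k$ (or at least $S_k = u_k$ for $k\ge1$, with $S_0$ handled separately). Writing $f(t) = \frac{t^2}{1-t^2}$, the quantity $S_k = f(b_k) - f(a_k)$ is, for the index $k = n+1\ge1$, governed by the relation defining $b_{n+1}$ in terms of $a_{n+1}$ and $u_n$. I would substitute $b_{n+1}^2 = \frac{a_{n+1}^2 + u_n(1-a_{n+1}^2)}{a_{n+1}^2 + (1+u_n)(1-a_{n+1}^2)}$ into $f(b_{n+1})$ and simplify: one computes $1 - b_{n+1}^2 = \frac{1-a_{n+1}^2}{a_{n+1}^2+(1+u_n)(1-a_{n+1}^2)}$, hence $f(b_{n+1}) = \frac{a_{n+1}^2 + u_n(1-a_{n+1}^2)}{1-a_{n+1}^2} = \frac{a_{n+1}^2}{1-a_{n+1}^2} + u_n = f(a_{n+1}) + u_n$. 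Therefore $S_{n+1} = f(b_{n+1}) - f(a_{n+1}) = u_n$ exactly. Consequently $\sum_{n\ge0}\lambda_n^A = S_0 + \sum_{n\ge0} u_n = f(b_0) - f(a_0) + \sum_{n\ge0}u_n < \infty$, since $a_0,b_0\in(0,1)$ makes $f(b_0)-f(a_0)$ finite and $\sum u_n < \infty$ by hypothesis. This proves the trace-class property and hence the almost-sure finiteness of the point count. I would also remark that the recurrence is well-posed: one must check $b_{n+1}\in(a_{n+1},1)$, i.e.\ that $f(b_{n+1}) = f(a_{n+1}) + u_n > f(a_{n+1})$ (clear, $u_n>0$) and $b_{n+1} < 1$ (clear, since $1-b_{n+1}^2 > 0$ from the formula), so that the sequences are genuinely valued in $(0,1)$ and the intervals $[a_n,b_n]$ are nondegenerate; one should also note $a_{n+1} > b_n$ keeps the intervals disjoint, so $A$ is a genuine disjoint union and the eigenvalue identity above is valid.

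Finally I would dispatch the two topological/measure claims. For property (A): choosing the free parameters $a_{n+1}\in(b_n,1)$ at each step so that $a_n\to1$ — which is always possible, e.g.\ pick $a_{n+1} = \frac{1+b_n}{2}$, forcing $a_n,b_n\to1$ — guarantees that $A$ accumulates at $1$; then for any $\delta>0$ some interval $[a_n,b_n]$ with $a_n > 1-\delta$ lies in $A\cap[1-\delta,1]$, so $Z([1-\delta,1])\cap Z(A) \supset Z([a_n,b_n])$, which has nonempty interior (it contains an open annulus). For property (B): the Lebesgue measure of $Z(A)$ is $2\pi\int_A r\,\mathrm dr = \pi\sum_k(b_k^2-a_k^2) \ge \pi(b_0^2 - a_0^2)$, and by taking $b_0$ as close to $1$ as desired and $a_0$ small one makes this at least $\pi-\delta$; one can then continue the recurrence from this $(a_0,b_0)$ as above. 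I expect the main obstacle to be purely bookkeeping: verifying the algebraic simplification of $f(b_{n+1})$ cleanly, and being careful that all the $(0,1)$-membership and disjointness constraints in the recurrence are actually consistent with also demanding $a_n\to1$ and $b_0$ near $1$ simultaneously — but since the $u_n$ only shrink the increments $S_k$ and the $a_{n+1}$ are free within $(b_n,1)$, there is no real tension, and the construction goes through.
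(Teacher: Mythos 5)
Your proof is correct and follows essentially the same route as the paper. Both rest on the same telescoping identity $\frac{b_{n+1}^2}{1-b_{n+1}^2} = \frac{a_{n+1}^2}{1-a_{n+1}^2} + u_n$, applied after writing $\sum_n \lambda_n^A = \sum_k \left(\frac{b_k^2}{1-b_k^2} - \frac{a_k^2}{1-a_k^2}\right)$. You are in fact slightly more complete than the paper: you explicitly invoke Tonelli for the interchange of sums and you actually spell out the verifications of properties (A) and (B), which the paper dismisses with ``the rest follows easily.''
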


\begin{proof}
First note that such sequences will be valued in $(0,1)$ because the first terms $ a_0, b_0$ are (must be) chosen in $ (0,1) $, and recursively, assuming that $ a_0, ..., a_n, b_0, ..., b_n $ are well-defined and in $(0,1)$ for some $ n \in \mathbf N$, we see that so are $ a_{n+1}, b_{n+1} $ by a sight at the recurrence relation and the fact that $ u_n > 0 $. 

Then, let us show that we have $ a_0 < b_0 < ... < a_n < b_n < ... \,$. First, $ a_0, b_0 $ are (must be) chosen to satisfy $ a_0 < b_0 $. Recursively, if, for some $ n \in \mathbf N $, we have $ a_0 < b_0 < ... < a_n < b_n $, then $ a_{n+1} > b_{n} $ by assumption, and it remains to show that $ b_{n+1} > a_{n+1} $.

We have
\[
    b_{n+1}^2 = \cfrac{a_{n+1}^2 + u_n(1-a_{n+1}^2)}{a_{n+1}^2 + (1+u_n)(1-a_{n+1}^2) } = \cfrac{ \cfrac{a_{n+1}^2}{1-a_{n+1}^2} + u_n}{ \cfrac{a_{n+1}^2}{1-a_{n+1}^2} + u_n + 1} 
\]
\[
    1 - b_{n+1}^2 = \cfrac{1}{\cfrac{a_{n+1}^2}{ 1-a_{n+1}^2} + u_n + 1}
\]
\[ 
    \cfrac{b_{n+1}^2}{1 - b_{n+1}^2} = \cfrac{a_{n+1}^2}{1-a_{n+1}^2} + u_n,
\]
Since $ u_n > 0 $ for all $ n \in \mathbf N $, we have
\[
    \cfrac{b_{n+1}^2}{1 - b_{n+1}^2} > \cfrac{a_{n+1}^2}{1-a_{n+1}^2},
\]
So, since $ u \mapsto \cfrac{u}{1+u} $ is strictly increasing on $ (0,1) $, 
\[
    \cfrac{ \cfrac{b_{n+1}^2}{1 - b_{n+1}^2}}{ 1+\cfrac{b_{n+1}^2}{1 - b_{n+1}^2}} > \cfrac{ \cfrac{a_{n+1}^2}{1-a_{n+1}^2}}{ 1+\cfrac{a_{n+1}^2}{1-a_{n+1}^2}}
\]
\[
    \cfrac{b_{n+1}^2}{1-b_{n+1}^2+b_{n+1}^2} > \cfrac{a_{n+1}^2}{1-a_{n+1}^2+a_{n+1}^2},
\]
From there, $ b_{n+1} > a_{n+1} $ follows.

The eigenvalues of the Bergman intergral operator retricted to 
\[
    A := Z\left( \bigcup_{n \ge 0} [a_n, b_n]\right),
\]
write as
\begin{align*}
    \lambda_n &= (2n+2) \int_A r^{2n+1} \mathrm d r \\
    &= \sum_{k \ge 0} b_k^{2n+2} - a_k^{2n+2},
\end{align*}
Recall that the Bergman restriction has a finite number of points almost surely if and only of the associated integral operator is trace-class. We have
\begin{align*}
    \sum_{n \ge 0} \lambda_n &= \sum_{n \ge 0} \sum_{k \ge 0} b_k^{2n+2} - a_k^{2n+2} \\
    &= \sum_{k \ge 0} \sum_{n \ge 0} b_k^{2n+2} - a_k^{2n+2} \\
    &= \sum_{k \ge 0} \frac{b_k^2}{1-b_k^2} - \frac{a_k^2}{1-a_k^2} \\
    &= \frac{b_0^2}{1-b_0^2} - \frac{a_0^2}{1-a_0^2} + \sum_{k \ge 1} \frac{b_k^2}{1-b_k^2} - \frac{a_k^2}{1-a_k^2} \\
    &= \frac{b_0^2}{1-b_0^2} - \frac{a_0^2}{1-a_0^2} + \sum_{k \ge 0} u_k \\
    &< +\infty.
\end{align*}
This prooves the main claim. The rest follows easily.
\end{proof}

\section{General deviations results for determinantal point processes}

As stated in Remark~\ref{rem:truncation}, our approach indicates that truncating restricted DPPs to a number of points equal to the expectation of their cardinality yields strong results such as Theorem~\ref{thm:main-analog} and Proposition~\ref{prop:analog-corollary}. We here further study the deviation of this cardinality through the following result. It is worthy to note that it is general as it applies not only to the Bergman kernel but to any determinantal point process.

\begin{theorem}
Let $\mathfrak S $ be a determinantal point process. Assume that its associated integral operator is trace-class. Denote $m$ the average number of points of $ \mathfrak S$. Then, $ m $ is finite, and we have for $ c \in (0,1) $,
\[
    \mathbf P( | \mathfrak S | \leqslant (1-c) m ) \leqslant \exp( -m(c +(1-c) \log(1-c) ) ),
\]
and for $ c > 0 $,
\[
    \mathbf P( | \mathfrak S | \geqslant (1+c) m ) \leqslant \exp( - m ((1+c) \log(1+c) -c)).
\]
\end{theorem}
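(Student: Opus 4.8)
The plan is to recognize $|\mathfrak S|$ as a sum of independent Bernoulli random variables and apply Chernoff bounds. By Theorem~\ref{thm:main-dpp-thm}, $|\mathfrak S| \overset{\text{Law}}{=} \sum_{k\ge 1} B_k$ where the $B_k$ are independent with $\mathbf P(B_k=1) = \lambda_k$, the eigenvalues of the (trace-class) integral operator. Finiteness of $m = \mathbf E|\mathfrak S| = \sum_k \lambda_k$ is exactly the trace-class hypothesis, and the Borel--Cantelli argument from the Corollary already guarantees $|\mathfrak S| < \infty$ almost surely. So everything reduces to a concentration inequality for $S := \sum_k B_k$ with $\mathbf E S = m$.

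\textbf{Lower tail.} For $t > 0$, Markov's inequality applied to $e^{-tS}$ gives $\mathbf P(S \le (1-c)m) \le e^{t(1-c)m}\,\mathbf E[e^{-tS}] = e^{t(1-c)m}\prod_k (1 - \lambda_k + \lambda_k e^{-t})$. Using $1 + x \le e^x$ with $x = \lambda_k(e^{-t}-1)$ yields $\mathbf E[e^{-tS}] \le \exp\big(m(e^{-t}-1)\big)$, so $\mathbf P(S \le (1-c)m) \le \exp\big(m(e^{-t}-1) + t(1-c)m\big)$. Optimizing over $t$: the exponent is minimized at $e^{-t} = 1-c$, i.e. $t = -\log(1-c) > 0$, giving exponent $m\big((1-c)-1 - (1-c)\log(1-c)\big) = -m\big(c + (1-c)\log(1-c)\big)$, which is the claimed bound.

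\textbf{Upper tail.} Symmetrically, for $t > 0$ Markov on $e^{tS}$ gives $\mathbf P(S \ge (1+c)m) \le e^{-t(1+c)m}\prod_k(1-\lambda_k+\lambda_k e^{t}) \le \exp\big(m(e^t - 1) - t(1+c)m\big)$, again via $1+x \le e^x$. Optimizing at $e^t = 1+c$, i.e. $t = \log(1+c) > 0$, gives exponent $m\big(c - (1+c)\log(1+c)\big) = -m\big((1+c)\log(1+c) - c\big)$, as claimed. One should check $t > 0$ in both cases (automatic since $c \in (0,1)$ for the lower tail and $c > 0$ for the upper tail) and note the bound on $\mathbf E[e^{\pm tS}]$ holds even when the product is infinite, since the partial products are monotone and bounded by the exponential of the convergent series $\sum_k \lambda_k = m$.

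\textbf{Main obstacle.} There is no real obstacle — this is the classical multiplicative Chernoff bound for sums of independent (here non-identically distributed) Bernoulli variables, and the only care needed is (i) invoking Theorem~\ref{thm:main-dpp-thm} correctly to pass from the DPP to the Bernoulli sum, and (ii) justifying the interchange of limit and expectation / the infinite product via monotone convergence so that the moment generating function bound $\mathbf E[e^{\pm tS}] \le e^{m(e^{\pm t}-1)}$ is valid in the infinite-rank case. The optimization over $t$ is elementary calculus and need not be spelled out in detail.
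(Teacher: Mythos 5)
Your proposal is correct and is essentially the same argument as the paper's. You present it as the classical Chernoff bound with a free parameter $t$ in $e^{\pm tS}$ followed by optimization at $e^{-t}=1-c$ (resp.\ $e^t=1+c$), whereas the paper simply substitutes that optimal value from the outset and applies Markov directly to $(1-c)^S$ (resp.\ $(1+c)^S$); after that substitution the two computations coincide step for step, including the use of the Bernoulli decomposition from Theorem~\ref{thm:main-dpp-thm}, independence to factor the moment generating function, and the bound $1+u\leqslant e^u$ to reach $e^{\pm cm}$.
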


\begin{remark}
    This is, to some extent, a generalization of Lemma 15 in \cite{DecreusefondMoroz2021}, to any DPP.
\end{remark}

\begin{proof}
Denote $B_n$ the $n$-th Bernoulli random variable (see Theorem~\ref{thm:main-dpp-thm}), we have $ B_n \sim \mathrm{Ber} (\lambda_n) $. Denote $ S=\sum\limits_{n=0}^\infty B_n$. Note that $m = \mathbf E(|\mathfrak S |) = \mathbf E(S)$. The associated integral operator is trace-class, so $m$ is finite. We have, for $ c \in (0,1) $ :
\begin{align*}
    \mathbf P( | \mathfrak S | \leqslant (1-c)m ) &= \mathbf P( S \leqslant (1-c)m ) \\
    &= \mathbf P\left[ (1-c)^{(1-c)m} \leqslant (1-c)^{S} \right] \\
    &\leqslant (1-c)^{-(1-c)m} \mathbf E[(1-c)^{S} ] \\
    &= (1-c)^{-(1-c)m} \mathbf E\left[ (1-c)^{\sum\limits_{n=0}^\infty B_n} \right] \\
    &= (1-c)^{-(1-c)m} \prod_{n=0}^\infty \mathbf E[ (1-c)^{B_n} ] \\
    &= (1-c)^{-(1-c)m} \prod_{n=0}^\infty (1-\lambda_n + \lambda_n(1-c)) \\
    &= (1-c)^{-(1-c)m} \prod_{n=0}^\infty (1-c\lambda_n) \\
\end{align*}
using $ 1+u \leqslant e^u $,
\begin{align*}
    &\leqslant (1-c)^{-(1-c)m} \prod_{n=0}^\infty e^{-c\lambda_n} \\
    &= (1-c)^{-(1-c)m} e^{-c \sum\limits_{n=0}^\infty \lambda_n} \\
    &= \exp(-(1-c)m \log(1-c) -c m) \\
    &= \exp(-m (c + (1-c) \log(1-c) )).
\end{align*}
This prooves the first part. For the second part,
\begin{align*}
    \mathbf P( | \mathfrak S | \geqslant (1+c)m ) &= \mathbf P( S \geqslant (1+c)m ) \\
    &= \mathbf P\left[ (1+c)^{S} \geqslant (1+c)^{(1+c)m} \right] \\
    &\leqslant (1+c)^{-(1+c)m} \mathbf E[(1+c)^{S} ] \\
    &= (1+c)^{-(1+c)m} \mathbf E\left[ (1+c)^{\sum\limits_{n=0}^\infty B_n} \right] \\
    &= (1+c)^{-(1+c)m} \prod_{n=0}^\infty \mathbf E[ (1+c)^{B_n} ] \\
    &= (1+c)^{-(1+c)m} \prod_{n=0}^\infty (1-\lambda_n + \lambda_n(1+c)) \\
    &= (1+c)^{-(1+c)m} \prod_{n=0}^\infty (1+c\lambda_n),
\end{align*}
using $ 1+u \leqslant e^u $,
\begin{align*}
    &\leqslant (1+c)^{-(1+c)m} \prod_{n=0}^\infty e^{c\lambda_n} \\
    &= \exp(-(1+c)m \log(1+c) +c m) \\
    &= \exp( - m ((1+c) \log(1+c) -c)). \\
\end{align*}
This prooves the second part.
\end{proof}

\newpage
\bibliographystyle{plain}
\bibliography{Article}

\end{document}